\newtheorem{defn}{Definition}[section]
\newtheorem{theorem}{Theorem}[section]
\newtheorem{lem}[theorem]{Lemma}
\newtheorem{proposition}[theorem]{Proposition}
\newtheorem{rem}{Remark}
 \newcommand{\la}{\lambda}
\newcommand{\zu}{[0,1]}
\newcommand\PCPS{\textit{Mathematical Proceedings of the Cambridge Philosophical Society} }
\newcommand\SPA{\textit{Stochastic processes and their applications} }
\newcommand\ep{\varepsilon}
\newcommand\RR{\mathbb{R}}
\newcommand\R{\mathbb{R}}
\newcommand\N{\mathbb{N}}
\newcommand\dimh{\dim_{\! H}}
\newcommand\ii{\mathcal{I}}
\newcommand{\card}{\mbox{Card}}
\begin{document}

\title[Multifractal analysis of sums of random pulses]
  {Multifractal analysis of sums of random pulses}

\author[Guillaume Sa\"es, St\'ephane Seuret]{Guillaume Saes, St\'ephane Seuret }

\address{Guillaume Sa\"es, Laboratoire d'analyse et de math\'ematiques appliqu\'ees \\
Universit\'e Paris-Est Cr\'eteil, Cr\'eteil, France \and Universit\'e de Mons,
Institut de Math\'ematique,
Le Pentagone,
7000 Mons,
Belgique}

\address{St\'ephane Seuret, Laboratoire d'analyse et de math\'ematiques appliqu\'ees \\
Universit\'e Paris-Est Cr\'eteil, Cr\'eteil, France}

\subjclass[2000]{28A78,  28A80, 60GXX, 26A15}
\keywords{Hausdorff dimension,  Stochastic processes,    Metric number theory,  Fractals and multifractals}

\maketitle

\begin{abstract}
In this paper, we determine the almost sure multifractal spectrum of  a class of random functions constructed as sums of pulses with random dilations and translations.  In addition,    the continuity modulii of these functions is investigated.
\end{abstract}

\section{Introduction}\label{sec:1}

Multifractal analysis  aims at describing those  functions or measures whose pointwise regularity varies rapidly from one point to another. Such behaviors are   commonly encountered in various mathematical fields, from harmonic and Fourier analysis (reference) to stochastic processes and dynamical systems \cite{ref7,ref29,ref16,ref24,ref9,martinjaffard}. Multifractality is actually a typical property in many function spaces \cite{Bayart,BN,BS,JaffFrisch}.  Multifractal behaviors are also identified on real-data signals coming from turbulence, image analysis, geophysics for instance \cite{ref27,ref28,AJF}.   To quantify such an erratic   behavior  for a function $f\in L_{loc}^{\infty} (\RR)$,  it is classically called for the notion of  pointwise  H\"older exponent defined in the following way.

\begin{defn}
Let $f\in L^{\infty}_{loc}(\mathbb{R})$, $x_0 \in \R$  and $\alpha \geq 0$.
A function $f$ belongs to $C^\alpha (x_0)$ when there exist a polynomial $P_{x_0}$ of degree less than $\lfloor\alpha\rfloor$ and $K_\alpha\in\mathbb{R}_+^*$ such that 
$$\exists r\in\mathbb{R}_+^*,\ \forall x\in B(x_0,r), \ |f(x)-P_{x_0}(x-x_0)| \leq K_\alpha | x-x_0 |^\alpha .$$
The pointwise H\"older exponent of $f$ at a point $x_0$ is defined by
$$h_f (x_0)=\sup\{\alpha \geq 0 :\ f\in C^\alpha (x_0)\} .$$
\end{defn}
 
 In order to  describe globally  the pointwise behavior of  a given function of a process,   let us introduce the following iso-H\"older  sets.

\begin{defn}
Let $f\in L^{\infty}_{loc}(\mathbb{R})$ and $h \geq 0$. The iso-H\"older set $E_f (h) $ is  $$E_f (h) =\{x\in\mathbb{R} : \ h_f (x)=h\}.$$
\end{defn}

The functions studied later  in this paper have fractal, everywhere dense,  iso-H\"older sets. It is therefore relevant to call for the Hausdorff dimension, denoted by $\dimh $, to distinguish them, leading to the notion of  multifractal spectrum.

\begin{defn}
 The multifractal spectrum of $f\in L^{\infty}_{loc}(\mathbb{R})$ on a Borel set $A\subset \R$ is the mapping defined by
$$D^A_f : \left\{
\begin{array}{lcl}
\mathbb{R}_+ & \longrightarrow & \mathbb{R}_+ \cup\{-\infty\} \\
h & \longmapsto & \dimh  (E_h \cap A).
\end{array}
\right. $$
\end{defn}

By convention, $\dimh (\emptyset)=-\infty$. The multifractal spectrum of a function or a stochastic  process $f$ provides one with a global information on the geometric distribution of the singularities of $f$.

In this article, we aim at computing the multifractal spectrum of a class of    stochastic processes   consisting in sums of    dilated-translated versions of a function (referred to as a "pulse") that can have an arbitrary form.
The translation and dilation parameters are random in our context.  The present article  hence follows a longstanding research line consisting in studying    the regularity properties  of (irregular) stochastic processes  that can be built by an additive construction, including for instance additive L\'evy processes,  random sums and wavelet series, random tesselations, see  \cite{jaffard1999levy,Xiao,ref9,Gousseau,Calka} amongst many references.

 Our model is particularly  connected to other models previously introduced and studied by many authors.  

For instance,  in  \cite{ref1}   Lovejoy and  Mandelbrot   modeled rain fields by a 2-dimensio\-nal  sum of random pulses  constructed as follows.  Consider a random Poisson measure   $N$ on $E =\mathbb {R}_+^* \times \mathbb{R}_+^* \times \mathbb{R}^d$, as well as  a "father pulse"  $\psi:\mathbb{R}^d \to \mathbb{R}$, $\alpha\in ]0,2[$ and $\eta\in ]0,1]$.  Lovejoy and   Mandelbrot  built and studied the process   $M:\mathbb{R}^d \to \mathbb{R}$ defined by
\begin{equation}\label{eqF}
M(x)=\int_{(\la,w,\tau)\in E} \lambda^{-\alpha} \psi (w^{\frac{1}{\eta}} (x-\tau)) N(d\la,dw,d\tau) = \sum_{(\lambda,w,\tau)\in S} \lambda^{-\alpha}   \psi_{\lambda, w ,\tau}(x),    
\end{equation}
where $S$ is the set of random points induced by the Poisson measure and $\psi_{\lambda,w,\tau}(x) : =  \psi (w^{\frac{1}{\eta}} (x-\tau))$  and $\eta=1$. 
 
In \cite{ref3},  Cioczeck-Georges and Mandelbrot showed that negatively correlated fractional Brownian motions ($0<H<1/2$) can be obtained as a limit (in the sense of distributions) of a sequence of processes defined as in (\ref{eqF}) with $\psi$ a well-chosen  jump function, $\alpha\in ]0,2[$ and $\eta=1$.  
Later, in  \cite{ref4}, the same authors  proved   that any fractional Brownian motion with Hurst index $H\in (0,1)\setminus\{1/2\}$ is a   limit  of a sequence of processes $\{M_n (x), x\geq 0 \}_{n \in\mathbb{N}}$ defined as in (\ref{eqF}) with    $\psi$    a conical   or semi-conical function. Other versions with general   pulses $\psi$ have been investigated in   \cite{ref15}.

\begin{figure} 

\includegraphics[scale=0.38]{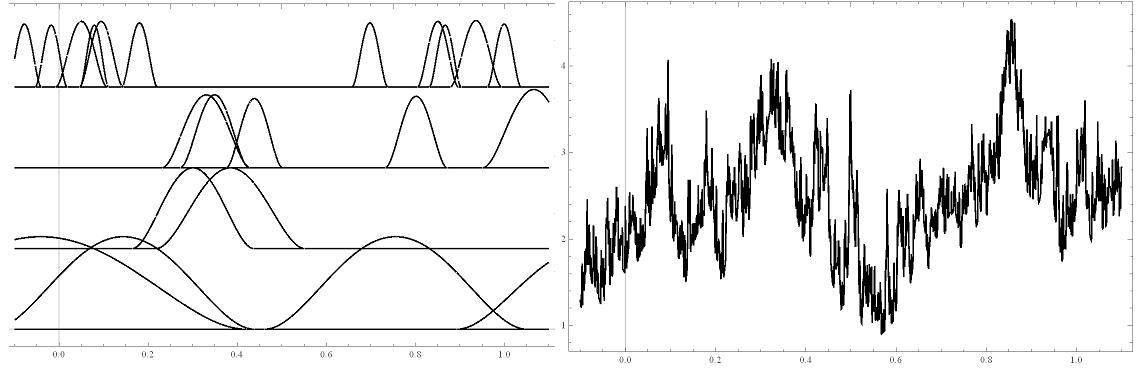}
\includegraphics[scale=0.38]{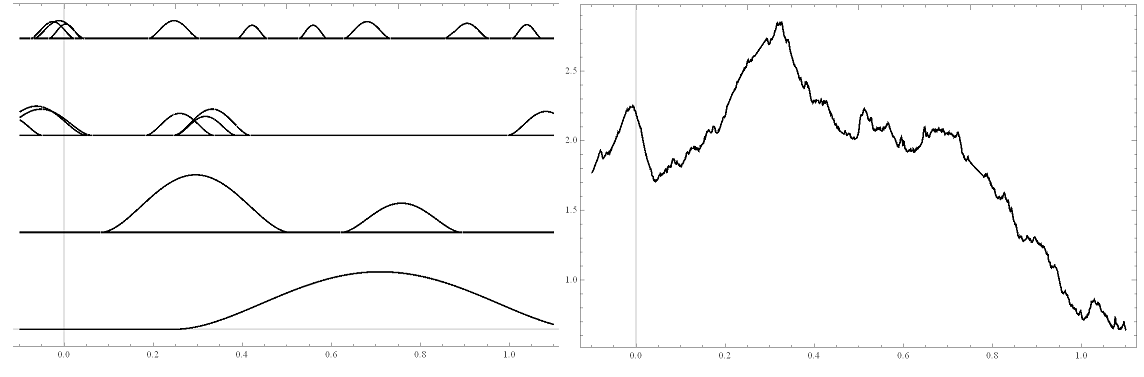}
 \caption{Two sample paths obtained with different pulses and parameters } \label{fig3}
\end{figure}

In   \cite{ref5}, Demichel studied  a model in which  only the    position coefficients $(X_n)_{n\geq 1}$ are random  : the corresponding model is  written
\begin{equation}\label{eq1.8}
    G(x)=\sum_{n=1}^{+\infty} a_n \psi(\lambda_n^{-1} (x-X_n)), \ \ \ x\in\mathbb{R} 
\end{equation}
where $(a_n)_{n\in\mathbb{N}^*}$ and $(\lambda_n)_{n\in\mathbb{N}^*}$ are two deterministic positive  sequences such that $\sum\limits_{n\in\mathbb{N}^*}a_n=+\infty $   and $(\lambda_n)_{n\in\mathbb{N}^*}$ is decreasing to $0$, and $X_n\sim U([0,1])$ is an i.i.d. sequence of random variables. 
The same example is developed   in \cite{ref10, ref21} where Demichel,  Falconer and   Tricot impose   that $ a_n=n^{-\alpha}$   with $0<\alpha<1$,    $\lambda_n = n^{-1}$, and  $\psi:\mathbb{R}\rightarrow\mathbb{R}$ is an even, positive continuous function,  decreasing  on $[0,1]$, equal to 0 on $[1,+\infty[$ satisfying   $\psi (0)=1$. \\Calling $\Gamma_G$ the graph of the process $G$ and $\dim_B \Gamma_G$ its box-dimension, they showed that as soon as there exists an interval $I$ on which  $\psi \in \mathcal{C}^H(\R)$ (the space of   global H\"older real functions of exponent $H$)  and is $\mathcal{C}^ 1$-diffeomorphic on some interval,   then almost surely
\begin{equation}\label{eq1.7}
2-\alpha \leq \dimh  (\Gamma_G)\leq \dim_B (\Gamma_G) \leq 2-\min\{\alpha,h\} . 
\end{equation}
 See also \cite{ref14} for the box dimension of $\Gamma_G$, or \cite{Roueff,Xiao2} for a more systematic study of graph dimensions.
When $\alpha <h$,  almost surely $\dimh  (\Gamma_G) = \dim_B (\Gamma_G) = 2- \alpha$.
In \cite{ref13}, Ben Abid    gave alternative conditions for the convergence of such processes $G$, also determining the uniform regularity of $G$, i.e. to which global H\"older space $\mathcal{C}^H(\R)$ $G$ may belong to, almost surely.

\medskip

Our purpose is to study another, somehow richer,   model of sums of  random pulses.

%%%%%%%%%%%%%%%%%%%%%
%%%%%%%%%%%%%%%%%%%%%
%%%%%%%%%%%%%%%%%%%%%
%%%%%%%%%%%%%%%%%%%%%
%%%%%%%%%%%%%%%%%%%%%
%%%%%%%%%%%%%%%%%%%%%
%%%%%%%%%%%%%%%%%%%%%
%%%%%%%%%%%%%%%%%%%%%
%%%%%%%%%%%%%%%%%%%%%
\section{A  model with additional randomness} \label{sec:2}

The  stochastic processes $F$   considered in this article are natural extensions of the previous models, and fit   in the general study of pointwise regularity properties of rough sample paths of stochastic processes. As  in the aforementioned works, we obtain  results regarding their  existence and  regularity properties. We go further by providing a complete multifractal analysis  of $F$ and by investigating  various modulii of     continuity.

Fix a probability space $(\Omega, \mathcal{F},\mathbb{P})$ on which all random variables and stochastic processes are defined.

Let  $(C_n)_{n\in\mathbb{N}^*}$  be a point  Poisson process whose intensity  is the Lebesgue measure on  $\mathbb{R}_+$, and let   $S$ be another point Poisson process,  independent with  $(C_n)_{n\in \N^*}$,  whose intensity  is  the Lebesgue measure on $\mathbb{R}_+^* \times [0,1]$. We write $S=(B_n,X_n)_{n\in\mathbb{N}^*}$ where the sequence  $(B_n)_{n\in \N^*}$ is increasing. By construction, the three sequences of random variables $(C_n)$, $(B_n)$ and $(X_n)$ are mutually independent.
 
\begin{defn}
\label{defF}
Let $\psi:\mathbb{R}\rightarrow\mathbb{R}$  be a Lipschitz   function  with support equal to $[-1,1]$, $\alpha\in (0,1)$ and $\eta\in (0,1)$.  The (random) sum of   pulses  $F$ is defined  by
\begin{equation}\label{eq2.1}
F(x)=\sum_{n = 1}^{+\infty} C_n^{-\alpha}  \psi_n(x), \ \ \mbox{ where } \ \  \psi_n(x):= \psi (B_n^{\frac{1}{\eta}} (x-X_n)) 
\end{equation}
\end{defn}

The parameter   $\alpha$ will be interpreted as  a regularity coefficient,  and $\eta$ as a lacunarity coefficient.  Observe that the support of $\psi_n$ is the ball $B(X_n, B_n^{-1/\eta})$ ($B(t,s)$ stands for the ball with centre $t$, radius $s$).

 The stochastic process $F$ possesses interesting properties on the interval $[0,1]$ only, since   $X_n\in [0,1]$. However, this is not a restriction at all, since $F$ can easily be extended to $\R$ as follows.
 
For every integer $m$, consider $F_m$, an independent copy of $F$ but shifted by $m$.  Then, 
$$\widetilde F :=\sum_{m\in \mathbb{Z}} F_m $$
   enjoys the same  pointwise   regularity properties as $F$. It is interesting to see that this new process  $\widetilde{F}$ has now stationary increments, and enlarges the quite narrow   class of stochastic processes with stationary increments whose multifractal analysis is completely understood.

\begin{figure}
\centering
\includegraphics[scale=0.3]{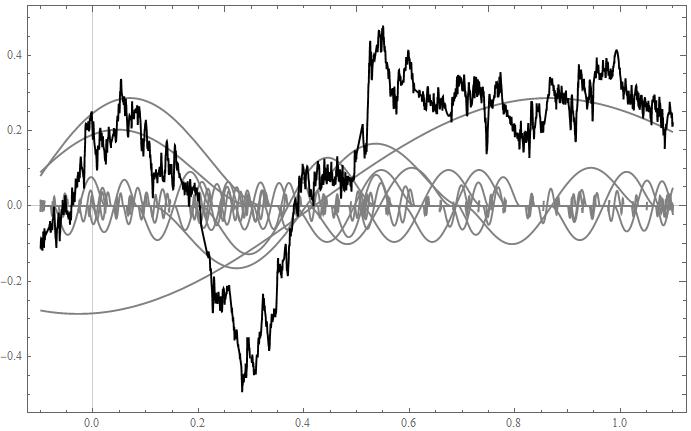}
\caption{Sample path of $F$ computed with   1000  dilated and translated pulses}  
\label{fig4}
\end{figure}

In \cite{ref9}, using for $\psi$ a smooth wavelet generating an orthonormal basis,    S. Jaffard studied the lacunary random wavelet series  
$$
W(x)=\sum_{j\in\mathbb{N}} \sum_{k\in\mathbb{Z}} C_{j,k}2^{-j\alpha}  \psi_{j,k}(x), \ \ \ x\in\mathbb{R} ,
$$
where for all $(j, k)\in\mathbb{N} \times \mathbb{Z}$,   $\psi_{j,k} (x)=\psi (2^j x-k)$ and the wavelet coefficients $C_{j, k}$ are independent random variables wavelets whose law is a Bernoulli measure with parameter $2^{-j\eta}$ (hence, depending  on $j$ only).  The main difference  between the lacunary wavelet series and our model (motivating our work) is that  not only  dilations $(B_n)_{n\in\mathbb{N}^*}$ but also the    translations $(X_n)_{n\in\mathbb{N}^*}$    are random in our case. Hence our  interest in   $F$ (and in $\widetilde F$) comes from the fact that it is not based on a   dyadic grid, hence providing  one with a   homogeneous model  more natural from a probabilistic point of view, the process  $\widetilde F$ having stationary increments.  The main results of this article  concern the global and pointwise regularity properties of $F$, which are proved to be similar to those of $W$.

\medskip

We start by the multifractal properties of $F$.
\begin{theorem}
\label{maintheorem} 
  Let $F$ be  as in Definition \ref{defF}. 
  With probability one, one has
$$
    D^{[0,1]} _F (H)=\left\{
    \begin{array}{lcr}
         \frac{H}{\alpha} & \mbox{if} & H\in [\alpha\eta, \alpha] , \\
         -\infty & \mbox{ else.} 
    \end{array}
    \right.
$$
\end{theorem}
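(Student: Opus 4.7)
The plan is to translate the pointwise H\"older exponent of $F$ at $x \in [0,1]$ into a random Diophantine approximation problem: how well is $x$ approached by the translations $X_n$ at the natural scale $B_n^{-1/\eta}$? For $x \in [0,1]$, I would introduce the approximation exponent
$$
\delta(x) = \limsup_{n \to \infty} \frac{-\log|x - X_n|}{(1/\eta)\log B_n},
$$
so that $|x - X_n| \le B_n^{-\delta/\eta}$ for infinitely many $n$ whenever $\delta < \delta(x)$. Since a.s.\ $B_n \sim n$, the two critical values are $\delta = \eta$ (typical points, $x$ within $n^{-1}$ of some $X_n$ infinitely often) and $\delta = 1$ (points inside infinitely many pulse supports). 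Invoking a Shepp-type covering theorem for the balls $B(X_n, B_n^{-1})$, whose radii $\sim n^{-1}$ sum to infinity, I would first establish that a.s.\ $\delta(x) \ge \eta$ for every $x \in [0,1]$. Then, by a random ubiquity / mass-transference argument adapted to the Poisson system $\{(X_n, B_n^{-1/\eta})\}_n$, I would obtain
$$
\dimh \bigl\{ x \in [0,1] : \delta(x) \ge \delta_0 \bigr\} = \eta/\delta_0, \qquad \delta_0 \ge \eta.
$$

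The heart of the proof is the pointwise identity $h_F(x) = \alpha\eta/\min(\delta(x), 1)$, valid almost surely for every $x \in [0,1]$. For the \emph{upper} bound on $h_F(x)$, I produce a sequence $y_k \to x$ on which $|F(y_k) - F(x)|$ is large: if $\delta(x) \le 1$, pick a subsequence $n_k$ realising the limsup in $\delta(x)$ and take $y_k = X_{n_k}$, so that $\psi_{n_k}$ alone contributes $\sim C_{n_k}^{-\alpha} \sim n_k^{-\alpha}$ to $F(y_k) - F(x)$, with the remaining pulses providing a lower-order remainder by an a.s.\ concentration estimate on the Poisson statistics; if $\delta(x) \ge 1$, instead take $y_k$ at distance $B_{n_k}^{-1/\eta}$ just outside $\mathrm{supp}\,\psi_{n_k}$. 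Either way, $|F(y_k) - F(x)| \gtrsim n_k^{-\alpha}$ at scale $|y_k - x| \lesssim B_{n_k}^{-\min(\delta(x),1)/\eta}$, giving $h_F(x) \le \alpha\eta/\min(\delta(x), 1)$. For the \emph{lower} bound I would majorise $|F(y) - F(x)|$ with $r = |y-x| \to 0$, splitting $F(y)-F(x) = \sum_n C_n^{-\alpha}(\psi_n(y)-\psi_n(x))$ by pulse width: wide pulses ($B_n^{-1/\eta} \ge r$) are controlled by the Lipschitz bound $|\psi_n(y) - \psi_n(x)| \le L B_n^{1/\eta} r$, while narrow pulses contribute at most $n^{-\alpha}$ and only if their support meets $[x,y]$. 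The a.s.\ separation $|x - X_n| \ge B_n^{-(\delta(x)+\varepsilon)/\eta}$ limits how many wide pulses can come close, and a uniform Poisson density bound controls the narrow-pulse mass; together they give $|F(y)-F(x)| \le C r^{\alpha\eta/\min(\delta(x),1)}$.

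Combining the two ingredients, setting $H = \alpha\eta/\min(\delta,1)$, the bijection $\delta \in [\eta, 1] \leftrightarrow H \in [\alpha\eta, \alpha]$ shows that, up to $\dimh$-negligible sets, $E_F(H) \cap [0,1]$ coincides with $\{x : \delta(x) = \alpha\eta/H\}$, whose Hausdorff dimension equals $\eta/(\alpha\eta/H) = H/\alpha$, giving the announced formula on the admissible range. For $H > \alpha$, the global inequality $\delta(x) \ge \eta$ forces $h_F(x) \le \alpha$, so $E_F(H) = \emptyset$; for $H < \alpha\eta$, the uniform H\"older regularity of $F$ (the pointwise lower bound specialised to $\delta = 1$) gives $h_F(x) \ge \alpha\eta$, so again $E_F(H) = \emptyset$.

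The two main obstacles I expect are (i) the random ubiquity step, where the centres $X_n$ and radii $B_n^{-1/\eta}$ are jointly random and coupled through the Poisson process $S$, forcing one to adapt the classical deterministic mass-transference principle to this two-parameter random setting; and (ii) the pointwise H\"older lower bound, which, in contrast with the lacunary wavelet series on a dyadic grid studied in \cite{ref9}, must cope with pulses supported on overlapping, non-aligned intervals, and therefore requires a uniform a.s.\ control of the local Poisson density of the $X_n$ across a whole range of scales simultaneously.
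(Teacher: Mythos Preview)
Your overall architecture---encode pointwise regularity via a random Diophantine approximation rate, then compute the dimension of the level sets by a covering theorem plus mass transference---is exactly the paper's strategy; your sets $\{x:\delta(x)\ge \delta_0\}$ are the paper's limsup sets $G_{\delta_0/\eta}$, and your dimension formula $\eta/\delta_0$ matches Proposition~\ref{propfinal2}.

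The genuine gap is in your upper bound $h_F(x)\le \alpha\eta/\min(\delta(x),1)$. You choose $n_k$ realising the $\limsup$ in $\delta(x)$ and claim that ``the remaining pulses provide a lower-order remainder''. This is not justified: if $n_k\in A_{j_k}$, nothing prevents other pulses $\psi_m$ with $m\in A_{j}$ and $j$ close to $j_k$ from having supports that contain $X_{n_k}$ (or $x$), and those contribute increments of the \emph{same} order $C_m^{-\alpha}\sim C_{n_k}^{-\alpha}$, so cancellation is possible. The paper flags this explicitly at the start of Section~\ref{sec:5} (``unfortunate irregularity compensation phenomena'') and resolves it by restricting to \emph{isolated} pulses: integers $n\in\mathcal I_j\subset A_j$ whose support meets no other support at comparable scales. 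The real work is then Theorem~\ref{prop5.1}, showing that the isolated balls alone still cover $[0,1]$, so that mass transference can be run on the isolated system $G'_\delta$ rather than on $G_\delta$. With this in hand, the upper bound (Proposition~\ref{prop7.1}) is obtained not via a raw increment $F(y_k)-F(x)$ but via the continuous wavelet transform $W_F(B_{n_k}^{-1/\eta},X_{n_k})$, which isolates the single surviving term at scale $j_k$ cleanly. In short, the lower bound on $h_F$ goes through essentially as you sketch (this is Proposition~\ref{prop6.1}), but you have inverted the difficulty: the hard direction is the \emph{upper} bound on $h_F$, and it needs the isolated-pulse machinery that your proposal omits.
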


The other results concern   the almost-sure global regularity of $F$ and its modulii of continuity. Let us recall the notions of modulus of continuity.

\begin{defn}
A non-zero increasing mapping  $\theta:\R^+\to \R $ is a    modulus  of continuity when it satisfies
\begin{enumerate}
\item $\theta (0)=0$,
\item There exists $K>0$ such that  for every $h\geq 0$, $\theta(2h)\leq K \theta(h)$.
\end{enumerate}
\end{defn}
Function spaces are naturally associated with  modulii of continuity.
 \begin{defn}
 \label{deftheta}
A function $f\in L^\infty _{loc}(\mathbb{R})$ has   $\theta:\R^+\to \R $ as  uniform   modulus  of continuitywhen   there exists $K>0$ such that
\begin{equation}
\forall h\in\mathbb{R}_+, \ w_f (h):=\sup_{|x-y|\leq h} |f(x)-f(y)|\leq K \theta(h) .  \nonumber
\end{equation}

A function $f\in L^\infty _{loc}(\mathbb{R})$ has   $\theta:\R^+\to \R $ as local      modulus of continuity at $x_0\in \R$  when   there exist $\eta_{x_0}>0$ and  $K_{x_0}>0$ such that
\begin{equation}
\label{localmodulus}
\forall  x \mbox{ such that } |x-x_0|\leq \eta_{x_0}, \ \ \ \    |f(x)-f(x_0)|\leq K_{x_0} \theta(|x-x_0|) .   
\end{equation}

A function $f\in L^\infty _{loc}(\mathbb{R})$ has   $\theta:\R^+\to \R $ as  almost-everywhere      modulus   of continuity  when $\theta$ is a local   modulus of continuity for $f$ at Lebesgue almost every $x_0\in \R$. 
\end{defn}

When $\alpha\in (0,1)$ and $\theta(h)= \theta_\alpha(h):= |h|^\alpha$, the functions having $\theta_\alpha$ as uniform   modulus of continuity is exactly the set $\mathcal{C}^\alpha(\R)$ of $\alpha$-H\"older functions (to deal with exponents $\alpha \geq 1$, the definition of $w_f (h)$ must be modified and use  finite differences of higher order).

\medskip

Our result theorem regarding continuity moduli is the following.

\begin{theorem}
\label{mainth2}
  Let $F$ be  as in Definition \ref{defF}.    With probability 1:
  \begin{enumerate}
  \item
 Tthe mapping  $h\mapsto   |h|^{\alpha\eta} |\log_2 (h)|^{2+\alpha}$ is a uniform    modulus of continuity of  $F$. 
\item
 The mapping  $h\mapsto  |h|^{\alpha } |\log_2 (h)|^{2+\alpha}$ is an almost everywhere    modulus of continuity of  $F$.
 \item
 At Lebesgue almost every $x_0\in \zu$, the local    modulus of continuity of $F$ at $x_0$ is  larger than  $h\mapsto  |h|^{\alpha } |\log_2 (h)|^{2\alpha}$.

\end{enumerate}
\end{theorem}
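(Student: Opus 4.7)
The plan is to decompose $F$ by scale and combine Poisson concentration with chaining, following a strategy analogous to the one used for lacunary random wavelet series. For each $j \geq 0$, group the pulses into a ``generation'': $\mathcal G_j := \{n : B_n \in [2^{j\eta}, 2^{(j+1)\eta})\}$, and set $F_j := \sum_{n \in \mathcal G_j} C_n^{-\alpha} \psi_n$. By Poisson tail bounds applied to the two independent processes, almost surely for every $j$ large enough, $|\mathcal G_j|$ is of order $2^{j\eta}$, the indices $n \in \mathcal G_j$ are of order $2^{j\eta}$ so that $C_n^{-\alpha}$ is of order $2^{-j\eta\alpha}$ up to polylogarithmic fluctuations (like $j^{\alpha}$), and each pulse in $\mathcal G_j$ has support radius $\asymp 2^{-j}$, $L^\infty$ norm $\asymp 2^{-j\eta\alpha}$, and Lipschitz constant $\asymp 2^{j(1-\eta\alpha)}$. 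The central quantitative ingredient, which I would establish first by Poisson tails plus a union bound over a dyadic net of $[0,1]$, is the almost-sure uniform geometric estimate
\[
N_j(x,r) := \#\{n \in \mathcal G_j : |X_n - x| \leq r\} \leq K \, (\log(2+j))^2 \, \bigl(1 + r\cdot 2^{j\eta}\bigr), \qquad \forall j,\ \forall x \in [0,1],\ \forall r > 0.
\]

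For assertion (1), fix $h > 0$ small, set $j := \lfloor \log_2(1/h) \rfloor$ and $x,y \in [0,1]$ with $|x-y| \leq h$. Split $|F(x) - F(y)| \leq \sum_{j' \leq j} |F_{j'}(x) - F_{j'}(y)| + \sum_{j' > j} |F_{j'}(x) - F_{j'}(y)|$. For $j' \leq j$, apply the Lipschitz bound on each pulse whose support meets $[x,y]$, of which there are at most $N_{j'}(x, 2^{-j'}) \lesssim (\log j)^2$ since $\eta < 1$; this gives a contribution $\lesssim (\log j)^2 \, 2^{j'(1-\eta\alpha)} h$ at generation $j'$. For $j' > j$, only pulses straddling $\{x, y\}$ enter the difference ($X_n$ must lie within $2^{-j'}$ of $x$ or $y$, again an interval of total length $\lesssim 2^{-j'}$, giving a count $\lesssim (\log j')^2$), and the $L^\infty$ bound gives a contribution $\lesssim (\log j')^2 \, 2^{-j'\eta\alpha}$. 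Both geometric series are dominated by their value at $j' = j$ and combine to $h^{\alpha\eta} (\log h)^{2+\alpha}$; the exponent $2 + \alpha$ is obtained by combining $(\log)^2$ from the Poisson union bound with the $(\log)^\alpha$ coming from the amplitude deviation $C_n^{-\alpha}$ at the critical scale.

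For assertion (2), the improvement comes from a Borel--Cantelli argument using $\int_0^1 N_{j'}(x_0, 2^{-j'})\,dx_0 \asymp 2^{j'(\eta-1)}$, which is summable because $\eta < 1$; hence Lebesgue almost every $x_0$ belongs to the support of pulses in only finitely many generations. For $|x - x_0| = h \asymp 2^{-j}$ small, only pulses whose support genuinely straddles the short interval $[x_0, x]$ contribute to $F_{j'}(x) - F_{j'}(x_0)$: for $j' \leq j$ this forces $X_n$ to lie within $h$ of the boundary of the support, a set of Lebesgue measure $\lesssim h$, hence an expected count $\lesssim h \cdot 2^{j'\eta}$. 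Inserting this tighter count into the scale-by-scale estimate of Step~1, the Lipschitz and $L^\infty$ sums now balance at $j' = j$ with exponent $\alpha$ rather than $\alpha\eta$, again up to the $(\log h)^{2+\alpha}$ correction.

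For assertion (3), which is the genuinely hard part, the strategy is to construct, for Lebesgue-a.e.\ $x_0$, a sequence of scales $j_k \to \infty$ along which a single atypical pulse in $\mathcal G_{j_k}$ forces an oscillation of the correct size. At each scale $j$ one considers the event that there exists a pulse in $\mathcal G_j$ with centre $X_n$ at distance $\asymp 2^{-j}$ from $x_0$ (so that the pulse covers a point $x_k = X_n$ at distance $h_k \asymp 2^{-j}$ from $x_0$ while $x_0$ itself lies just outside its support) and whose index $n$ is exceptionally small, producing an amplitude $C_n^{-\alpha}$ larger than the typical $2^{-j\eta\alpha}$ by a factor of order $j^{2\alpha}$. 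The probability of such a scale-$j$ event can be shown to be non-summable by Poisson computations, so independence across generations and Borel--Cantelli yields infinitely many such $j$ almost surely, and a Fubini/stationarity argument (using the shifted extension $\widetilde F$) transfers the conclusion to Lebesgue-a.e.\ $x_0$. The residual contribution of all other generations at the scale $h_k$ is absorbed using the sharper bounds from assertion (2). The main obstacle here is orchestrating the competition between this ``single-exceptional-pulse'' lower bound and the background contribution, while keeping the event lower bounds uniform enough in $x_0$ to upgrade positive-probability-at-each-scale into almost-sure-infinitely-often for Lebesgue-a.e.\ $x_0$.
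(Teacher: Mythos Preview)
Your approach to part (1) is a reasonable alternative to the paper's: the paper proves the uniform modulus via the continuous wavelet transform (bounding $|W_F(s,t)|$ and invoking the Jaffard--Meyer characterization), whereas you work directly with increments. Both routes work; the direct one is arguably more elementary, while the wavelet route is reused later for the lower bound. One quibble: your overlap estimate $N_j(x,r)\leq K(\log(2+j))^2(1+r\,2^{j\eta})$ is too optimistic --- the paper's Lemma on overlaps gives a factor $j^2$, not $(\log j)^2$, and that $j^2$ is exactly what produces the $|\log h|^{2+\alpha}$ you want (as you yourself say at the end of your paragraph on (1)).

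For part (2) there is a real gap. Knowing that $x_0$ lies in the support of only finitely many pulses is \emph{not} enough: a point can sit just outside the support of infinitely many $\psi_n$, in which case your Lipschitz-from-the-boundary bound at generation $j'$ still gives a contribution of order $j'^{2+\alpha}2^{j'(1-\eta\alpha)}h$, and summing to $j'=j$ recovers only the uniform exponent $\alpha\eta$. Your attempted fix --- replacing the a.s.\ count $\lesssim j'^2$ by the ``expected count $\lesssim h\,2^{j'\eta}$'' --- is an expectation, not a pathwise bound valid for all small $h$ at a fixed $x_0$. The paper's cure is to exclude a slightly larger ball: one shows that the limsup set $\widetilde G_1=\limsup_j \bigcup_{n\in A_j} B(X_n, B_n^{-(1+3\varepsilon_j)})$ has Lebesgue measure zero (the generation-$j$ covering has measure $\asymp j^{-2}$), and for $x_0\notin\widetilde G_1$ the increment argument of the paper's Proposition on $G_\delta$ goes through with $\delta$ replaced by $1+3\varepsilon_j$, yielding the $h^\alpha|\log h|^{2+\alpha}$ modulus.

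Part (3) is where your plan breaks down. First, the event ``some $X_n$ with $n\in\mathcal G_j$ lies at distance $\asymp 2^{-j}$ from $x_0$'' has probability $\asymp 2^{j\eta}\cdot 2^{-j}=2^{-j(1-\eta)}$, which is summable; by Borel--Cantelli it occurs only finitely often, so your sequence $(j_k)$ does not exist. The correct scale is $h_k\asymp B_{n_k}^{-1}\asymp 2^{-j_k\eta}$, not $2^{-j_k}$. Second, the ``exceptionally small index'' idea cannot produce the factor $j^{2\alpha}$: since $C_n\sim n$ with fluctuations of order $\sqrt n$, and $n\asymp 2^{j\eta}$ for $n\in\mathcal G_j$, the amplitudes $C_n^{-\alpha}$ are essentially deterministic up to $o(1)$ corrections --- the logarithmic factor comes instead from enlarging the approximation radius by $B_n^{\widetilde\varepsilon_j}$. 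Third, and most importantly, you do not explain how to prevent the chosen pulse from being cancelled by \emph{other pulses of comparable scale}: the upper bounds from (2) control only the aggregate, not individual cancellations within $\widetilde A_j$. The paper handles this by introducing \emph{isolated pulses} --- integers $n\in A_j$ whose support meets no other support $B(X_m,B_m^{-1/\eta})$ for $m$ in a range $\widetilde A_j$ of nearby generations --- and proving (via a nontrivial covering argument) that $[0,1]=\limsup_j\bigcup_{n\in\mathcal I_j}B(X_n,B_n^{-(1-\widetilde\varepsilon_j)})$. At an isolated $X_{n_k}$ the wavelet coefficient $W_F(B_{n_k}^{-1/\eta},X_{n_k})$ reduces to a single term, and the resulting lower bound $\gtrsim B_{n_k}^{-1/(2\eta)-\alpha(1+\varepsilon_{j_k})}$ is then compared with $|x_0-X_{n_k}|\leq B_{n_k}^{-(1-\widetilde\varepsilon_{j_k})}$ to extract the $|h|^\alpha|\log h|^{2\alpha}$ lower bound. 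This isolation mechanism is the missing idea in your proposal.
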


\begin{rem}
Items (ii) and (iii) above provide us with  a tight window for the optimal almost everywhere    modulus of continuity $\theta_F$ of  $F$, i.e. 
$$  |h|^{\alpha } |\log_2 (h)|^{2\alpha} \leq \theta_F(h) \leq |h|^{\alpha } |\log_2 (h)|^{2+\alpha}.$$
The investigation of a sharper estimate for this   modulus of continuity is certainly of interest. For instance, S. Jaffard was able to obtain a precise characterization in the case of lacunary wavelet series, see Theorem 2.2 of \cite{ref9}.
\end{rem}

\begin{rem}
The result can certainly be extended to  dimension $d>1$ with parameters $\alpha>1$, provided that $\psi \in  C^{\lfloor \alpha \rfloor +1}(\mathbb{R}^d)$. This would add technicalities   not developed here. 
\end{rem} 

\begin{figure}
\includegraphics[scale=0.4]{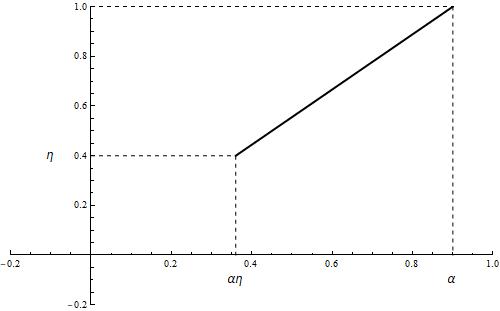}
\caption{Multifractal spectrum in the case $\alpha=0.9$ and $\eta=0.4$}
\label{fig7}
\end{figure}

The paper is organized as follows. Preliminary results are given in Sections \ref{sec:3} and \ref{sec:5}. A key point will be to estimate  for $j\in\mathbb{N}$,   the maximal number of integers   $n\in\mathbb{N}^*$ satisfying $2^{j} \leq B_n^{\frac{1}{\eta}}<2^{j+1}$, such that the support of $\psi_n$ contains a given point  $x\in [0,1]$ (a bound uniform in $x\in [0,1]$ is obtained).   More specifically, we will focus on the so-called "isolated" pulses $\psi_n$, i.e. those pulses whose support intersect only a few number of supports of other pulses with comparable support size.   These random covering questions are dealt with in Section \ref{sec:5}. This is key   to obtain lower and upper estimates for the pointwise   H\"older exponents of $F$ at all points and to get Theorem \ref{maintheorem}. More precisely, in Section \ref{sec:4},   item (i) of Theorem \ref{mainth2} is proved, and   a uniform lower bound for all  the pointwise H\"older exponents of $F$ is obtained. In   Sections \ref{sec:6} and Section \ref{sec:7}, we relate the approximation rate of a point $x\in \zu$ by some family of random balls to the pointwise regularity of $F$. This allows us to derive the almost sure multifractal spectrum of $F$ in Section  \ref{sec:8}. 
In Section \ref{sec:9}, we explain how to get the almost everywhere     modulus of continuity for $F$ (items (ii) and (iii) of Theorem \ref{mainth2}). Finally,  Section \ref{sec:10} proposes some research perspectives.

%%%%%%%%%%
%%%%%%%%%%
%%%%%%%%%%
%%%%%%%%%%
%%%%%%%%%%
%%%%%%%%%%
\section{Preliminary results} \label{sec:3}

Preliminary results are exposed, some of which can be found in standard books  \cite{ref6, ref23}.  
%%%%%%%%%% LEMMA 1 %%%%%%%%%%

For   $j\in\mathbb{N}$,  define 
\begin{eqnarray}
A_0 &=& \{n\in\mathbb{N}^* : \ 0 \leq B_n^{\frac{1}{\eta}} \leq 1\} ,\nonumber\\
A_j &=& \{n\in\mathbb{N}^* : \ 2^{j-1} < B_n^{\frac{1}{\eta}} \leq 2^j\} \ \ \ \mbox{ when  } j> 0 ,\label{eq3.4}\\
N_j &=&\card(A_j) . \nonumber
\end{eqnarray}

From its definition,  each  $N_j$ is a Poisson random variable with parameter $2^{\eta j}-2^{\eta(j-1)}$. 

%%%%%%%%%% LEMMA 2 %%%%%%%%%%

\begin{lem}\label{lem3.3}
Almost surely, there exist for $j$ large enough,
\begin{equation}\label{eq3.5}
2^{\eta j(1-\varepsilon_j)}\leq N_j \leq 2^{\eta j(1+\varepsilon_j)} \ \ \mbox{ with } \ \   \varepsilon_j=\frac{\log_2 (j)}{\eta j} .
\end{equation}
\end{lem}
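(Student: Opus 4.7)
The plan is to exploit the fact that $N_j$ is a Poisson random variable with parameter $\lambda_j = 2^{\eta j}-2^{\eta(j-1)} = (1-2^{-\eta})\,2^{\eta j}$, and to control both tails via Chernoff-type deviation inequalities, then apply Borel–Cantelli to each side of \eqref{eq3.5}.

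First, I would recall the standard concentration inequality: for $X\sim\mathrm{Poisson}(\lambda)$ and any $t>0$,
$$\mathbb{P}(X\geq t\lambda)\leq \exp\bigl(-\lambda\,(t\log t - t + 1)\bigr),\qquad \mathbb{P}(X\leq t\lambda)\leq \exp\bigl(-\lambda\,(t\log t - t + 1)\bigr),$$
where the first bound is used for $t>1$ and the second for $t<1$. Writing the target upper bound $2^{\eta j(1+\varepsilon_j)}=j\cdot 2^{\eta j}$ and the target lower bound $2^{\eta j(1-\varepsilon_j)}=j^{-1}\cdot 2^{\eta j}$ makes the scaling transparent.

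For the upper tail, I apply the inequality with $t=t_j^+:=\tfrac{j}{1-2^{-\eta}}$, which grows like $j$. Since $t_j^+\log t_j^+ - t_j^+ + 1 \sim j\log j$ and $\lambda_j\asymp 2^{\eta j}$, one obtains
$$\mathbb{P}\bigl(N_j\geq 2^{\eta j(1+\varepsilon_j)}\bigr)\leq \exp\bigl(-c\, j\log j\cdot 2^{\eta j}\bigr),$$
for some constant $c>0$ and $j$ large. For the lower tail, I take $t=t_j^-:=\tfrac{1}{j(1-2^{-\eta})}\to 0$; since $t_j^-\log t_j^- - t_j^-+1\to 1$, it follows that
$$\mathbb{P}\bigl(N_j\leq 2^{\eta j(1-\varepsilon_j)}\bigr)\leq \exp\bigl(-c'\cdot 2^{\eta j}\bigr),$$
for some $c'>0$ and $j$ large. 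Both right-hand sides are summable in $j$ (in fact super-exponentially small), so the Borel–Cantelli lemma yields that almost surely only finitely many of these deviation events occur, proving \eqref{eq3.5} for all sufficiently large $j$.

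There is no real obstacle in this argument; the only mild care needed is to check that the chosen $t_j^\pm$ produce the precise thresholds $2^{\eta j(1\pm\varepsilon_j)}$ and that the rate function $h(t)=t\log t - t + 1$ gives decay which, multiplied by $\lambda_j\asymp 2^{\eta j}$, dominates any polynomial factor in $j$ so that the series $\sum_j \mathbb{P}(\cdot)$ converges. Replacing $\varepsilon_j$ by $\frac{\log_2 j}{\eta j}$ is a deliberate choice matching the logarithmic correction provided by the Chernoff function.
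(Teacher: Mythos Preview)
Your proof is correct. The Chernoff bounds for a Poisson variable, applied with the thresholds $t_j^\pm$ you chose, give super-exponentially small tail probabilities, and Borel--Cantelli concludes exactly as you say.

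The paper, however, takes a more elementary route: it simply applies the Bienaym\'e--Tchebychev inequality (using only that $\mathrm{Var}(N_j)=\mathbb{E}[N_j]=a\,2^{\eta(j-1)}$ with $a=2^\eta-1$) at the deviation scale $j\,2^{\eta(j-1)/2}$, obtaining
\[
\mathbb{P}\bigl(|N_j-a\,2^{\eta(j-1)}|\geq j\,2^{\eta(j-1)/2}\bigr)\leq \frac{a}{j^2},
\]
which is already summable. This yields a.s.\ for large $j$ the much tighter two-sided bound $N_j=a\,2^{\eta(j-1)}+O(j\,2^{\eta j/2})$, from which the cruder window $j^{-1}2^{\eta j}\leq N_j\leq j\,2^{\eta j}$ (i.e.\ \eqref{eq3.5}) follows immediately. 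Your exponential-moment argument is stronger in terms of tail decay, but that extra strength is not used anywhere; the paper's second-moment bound has the advantage of needing no Poisson large-deviation machinery, and its $O(1/j^2)$ estimate is later quoted as the explicit inequality \eqref{eq3.8}, so if you adopt the Chernoff route you should note that \eqref{eq3.8} still holds (indeed with a much smaller right-hand side).
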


\begin{proof}
Introduce the counting random function $(M_t)_{t\in\mathbb{R}_+^*}$ of the point process $(B_n)_{\mathbb{N}^*}$ as $M_t=\sup\{ n\in\mathbb{N}^* \ : \ B_n\leq t \}=\sum_{n\in\mathbb{N}^*}  \mathds{1}_{B_n\leq t}$.

For all $0\leq s < t$,  $M_t-M_s$ is a Poisson variable with parameter $(t-s)$. Noting that   $N_j=M_{2^{\eta j}}-M_{2^{\eta(j-1)}}$,   the random variable $ N_j $ has a Poisson distribution of parameter $  a2^{\eta(j-1)}$ where $a=2^\eta-1$. By the Bienaym\'e-Tchebychev inequality, since $\mathbb{E}[N_j]=a 2^{\eta (j-1)}$, one has   
\begin{equation}\label{eq3.7}
\mathbb{P}(|N_j - a2^{\eta(j-1)} |\geq j 2^{\frac{\eta}{2}(j-1)})\leq \frac{a 2^{\eta (j-1)}}{j^2 2^{\eta (j-1)}} \leq \frac{a}{j^2} .   
\end{equation}
By the Borel-Cantelli lemma, a.s. for $j$ large enough, $|N_j-a 2^{\eta(j-1)}|\leq j 2^{\frac{\eta}{2}(j-1)}$.
In particular,  for every $\alpha>0$ and $j$ large enough, $j^{-\alpha }2^{j\eta}\leq N_j\leq j^\alpha 2^{j\eta}$. This  implies \eqref{eq3.5}.  
\end{proof}

From (\ref{eq3.7}), for every $\alpha>0$,  there exists $K> 0$ such that for every $j$,
\begin{equation}\label{eq3.8}
\mathbb{P}(N_j\notin [2^{\eta j (1-\alpha \varepsilon_j)},2^{\eta j (1+\alpha \varepsilon_j)}])\leq \frac{K}{j^2}.
\end{equation}
Observe that \eqref{eq3.8} indeed holds for every $j$ with a suitable choice for $K$.
This will be used later. Bounds for the  random variables $B_n$ and $C_n$ are deduced from the previous results.

%%%%%%%%%% LEMMA 3 %%%%%%%%%%

\begin{lem}\label{lem3}
Almost surely,   for all $j\in\mathbb{N}$ large enough and $n\in A_j$,
\begin{eqnarray}
\label{eq3.9}
 2^{\eta j (1-\varepsilon_j)} \leq B_n, C_n \leq   2^{\eta j (1+\varepsilon_j)} .
\end{eqnarray}
\end{lem}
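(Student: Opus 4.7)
The plan is to treat the two halves of the inequality separately. The $B_n$ part is immediate from the definition of $A_j$, while the $C_n$ part follows from a concentration argument parallel to that of Lemma~\ref{lem3.3}, applied to the independent Poisson process $(C_n)$.

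For the $B_n$ bounds, if $n\in A_j$ then by \eqref{eq3.4}, $B_n\in(2^{\eta(j-1)},2^{\eta j}]$. With $\varepsilon_j=\log_2(j)/(\eta j)$ one has the identity $2^{\eta j\varepsilon_j}=j$, hence $2^{\eta j(1-\varepsilon_j)}=2^{\eta j}/j$ and $2^{\eta j(1+\varepsilon_j)}=j\cdot 2^{\eta j}$. These quantities bracket $(2^{\eta(j-1)},2^{\eta j}]$ as soon as $j\geq 2^{\eta}$, so for every $j$ large enough.

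For the $C_n$ bounds I would first locate the random set of integers $n$ comprising $A_j$. Because $(B_n)$ is indexed in increasing order, $n\in A_j$ iff $\widetilde N_{j-1}+1\leq n\leq \widetilde N_j$, where $\widetilde N_j:=\sum_{k=0}^{j}N_k$ counts the $B$-points below $2^{\eta j}$ and is therefore Poisson of parameter $2^{\eta j}$. Applying the Chebyshev + Borel--Cantelli mechanism of \eqref{eq3.7} to $\widetilde N_{j-1}$ and to $\widetilde N_j$, I obtain a.s.\ for $j$ large,
$$
\tfrac{1}{2}\cdot 2^{\eta(j-1)}\leq n \leq 2\cdot 2^{\eta j} \qquad \text{for every }n\in A_j.
$$
Then, using the independence of $(C_n)$ with $(B_n,X_n)$, I run the same scheme on the counting function $M^C_t:=\#\{n\geq 1:\, C_n\leq t\}$ of the $C$-Poisson process, which is Poisson of parameter $t$, at the two deterministic scales $t=2^{\eta j}/j$ and $t=j\cdot 2^{\eta j}$. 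Chebyshev + Borel--Cantelli then yield a.s.\ for $j$ large,
$$
M^C_{2^{\eta j}/j}< \tfrac{1}{2}\cdot 2^{\eta(j-1)} \quad\text{and}\quad M^C_{j\cdot 2^{\eta j}}> 2\cdot 2^{\eta j}.
$$
Since $n\mapsto C_n$ is increasing and $C_n>t \Leftrightarrow M^C_t<n$, combining with the index range above yields, for every $n\in A_j$, $2^{\eta j}/j<C_n<j\cdot 2^{\eta j}$, i.e.\ $2^{\eta j(1-\varepsilon_j)}\leq C_n\leq 2^{\eta j(1+\varepsilon_j)}$.

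The only technical point to watch is that the Chebyshev quotient at scale $t=2^{\eta j}/j$, namely variance $2^{\eta j}/j$ divided by a squared gap of order $(2^{\eta(j-1)})^2$, is summable in $j$; this is immediate as soon as $\eta>0$, and the three other deviation events are even easier. Apart from this bookkeeping, the proof is essentially a re-application of the argument of Lemma~\ref{lem3.3} to an independent copy of the Poisson process.
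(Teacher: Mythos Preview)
Your argument is correct and follows essentially the same route as the paper: locate the index range of $n\in A_j$ via Poisson concentration on the $B$-counting process, then transfer to $C_n$ via concentration on the independent $C$-counting process (the paper packages this second step as the LLN-type bound $n/2\leq C_n\leq 2n$, which is equivalent). Your direct verification of the $B_n$ bound straight from the definition of $A_j$ is a small simplification over the paper, which treats $B_n$ and $C_n$ uniformly through the same $B_n,C_n\sim n$ mechanism.
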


\begin{proof}
It is standard (from the law of large numbers for instance) that almost surely, for every $n\in\mathbb{N}^*$ large enough
\begin{eqnarray}
\label{eq3.2}
    \frac{n}{2}   \leq B_n \leq 2 n \ \ \mbox{ and } \ \    \frac{n}{2}   \leq C_n \leq 2 n .
\end{eqnarray}
  
 Let $J\in\mathbb{N}$ be large enough so that \eqref{eq3.5} holds for $j\geq J$. Call $A=  \sum_{j'=0}^{J } N_{j'}   $.
 
 Let  {$j\geq J+1$}, and $n\in A_j$. By definition, one has $\sum_{j'=0}^{j-1} N_{j'} \leq n \leq \sum_{j'=0}^j N_{j'}$. 
 
We apply by \eqref{eq3.7} with $\alpha=1/2$.  On one side,    
$$ \sum_{j'=0}^{j -1} N_{j'} \geq N_{j-1} \geq 2^{\eta (j-1)(1-\alpha \varepsilon_{j-1})} \geq K_1 2^{\eta j (1-\alpha \ep_j)} \geq 2^{\eta j (1-  \ep_j)} .$$
 On the other side,  since $j\ep_j$ is increasing with $j$, when $j$ becomes large one has
\begin{eqnarray*}
 \sum_{j'=0}^j N_{j'} \leq  A +  \sum_{j'=J+1 }^{j} 2^{\eta j'(1+\alpha \varepsilon_{j'})} \leq  A + 2^{\eta j \ep_j}  \sum_{j'=J+1 }^{j } 2^{\eta j'}\leq K_2 2^{\eta j (1+\alpha \varepsilon_{j})}  ,\end{eqnarray*} 
since $A$ is finite. The last term is less than $  2^{\eta j (1+  \varepsilon_{j})} $, so combining this  with  \eqref{eq3.2}  gives \eqref{eq3.9}.
\end{proof}

%%%%%%%%%% LEMMA 5 %%%%%%%%%%

Finally, for all $j\in\mathbb{N}$ and $n\in A_j $, additional information on the number of pulses $\psi_n $ for $n\in A_j$  (see  \eqref{eq2.1}) whose support contains a given $x\in [0,1]$ is needed. So, for $x\in [0,1]$, $r>0$ and $n \in\mathbb{N}^*$, set $$
 T_n(x,r )=\left\{ \begin{array}{ll}
1 & \mbox{ if } B(X_n, B_n^{-\frac{1}{\eta}})\bigcap B(x,r)\neq \emptyset, \\
0 & \mbox{ otherwise.}
\end{array}\right. 
$$

 Next Lemma  describes the number of overlaps between the balls  $B(X_n, B_n^{-\frac{1}{\eta}}) $ for $n\in A_j$.  It is an improvement of some properties  proved in  \cite{ref5}.

\begin{lem}\label{lem3.6}
Almost surely, there exists $K>0$ such that for every $x \in [0,1]$, for every  $J,j \in\mathbb{N}$,        
\begin{eqnarray}\label{eq3.11}
 \sum_{n\in A_j} T_n(x,2^{-\eta J})  \leq  K  j^2  \max(1,2^{\eta(j-J)}).
\end{eqnarray}\end{lem}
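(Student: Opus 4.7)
The plan is to bound (\ref{eq3.11}) by a covering argument combined with a Chernoff-type tail estimate and Borel--Cantelli. For $n\in A_j$ the support of $\psi_n$ has radius $B_n^{-1/\eta}\in(2^{-j},2^{-(j-1)}]$, so $T_n(x,2^{-\eta J})=1$ forces $|X_n-x|\leq 2^{-\eta J}+2^{-(j-1)}\leq 3r^*$, where $r^*:=\max(2^{-\eta J},2^{-(j-1)})$. Crucially, the left-hand side of (\ref{eq3.11}) is non-increasing in $J$, while the right-hand side equals $Kj^2$ for all $J\geq j$; hence once the inequality is established for $J\in\{0,1,\ldots,\lceil j/\eta\rceil\}$ (that is, $O(j)$ critical values per level $j$), monotonicity disposes of the remaining $J$. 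For each such pair $(j,J)$, I cover $[0,1]$ by $L\leq \min(2^{\eta J},2^{j-1})+1$ intervals of length $r^*$ and inflate each into its $3r^*$-neighborhood $\widetilde I$. The sum (\ref{eq3.11}) is then dominated, uniformly over $x\in \widetilde I$, by $Z_{\widetilde I}:=\#\{n\in A_j:X_n\in\widetilde I\}$.

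Conditionally on $N_j=m$, the variables $(X_n)_{n\in A_j}$ are i.i.d.\ uniform on $[0,1]$ and independent of the $(B_n)$ (by mutual independence of the underlying Poisson processes). Thus $Z_{\widetilde I}\sim \mathrm{Bin}(m,p)$ with $p\leq 6r^*$, and the standard tail bound gives
\[
\mathbb{P}\bigl(Z_{\widetilde I}\geq k\mid N_j=m\bigr)\leq \binom{m}{k}(6r^*)^k\leq \left(\frac{6\,e\,m\,r^*}{k}\right)^{k}.
\]
Restricting to the event $\{N_j\leq j\,2^{\eta j}\}$, whose complement has probability $\leq K_0/j^2$ by \eqref{eq3.8}, a short case analysis (splitting on the sign of $\eta J-(j-1)$ and of $j-J$) yields the uniform estimate $m\,r^*\leq 2j\max(1,2^{\eta(j-J)})$. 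With the target $k_{j,J}:=Kj^2\max(1,2^{\eta(j-J)})$, the ratio $6\,e\,m\,r^*/k_{j,J}\leq 12e/(Kj)$ drops below $1/(2j)$ for $K$ large, so $\mathbb{P}(Z_{\widetilde I}\geq k_{j,J}\mid N_j\leq j\,2^{\eta j})\leq (2j)^{-Kj^2}$.

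A union bound over the $L\leq C\cdot 2^j$ intervals and over the $O(j)$ critical values of $J$ gives, per level $j$, a total failure probability bounded by $C\,j\,2^j(2j)^{-Kj^2}+K_0/j^2$, which is summable. Borel--Cantelli then supplies a (random) $j_0<\infty$ beyond which (\ref{eq3.11}) holds for every $J$ with the fixed constant $K$. For $j<j_0$ the sum is bounded trivially by $N_j$, which is almost surely finite, so enlarging $K$ by $\max_{j<j_0}N_j/\max(1,j^2)$ yields the statement with an a.s.\ finite constant. The main technical point is the uniform estimate $m\,r^*\leq 2j\max(1,2^{\eta(j-J)})$, which packages the four sub-cases into a single Chernoff input; combined with the monotonicity observation it converts an \emph{a priori} uncountable supremum over $J$ into a countable one amenable to Borel--Cantelli.
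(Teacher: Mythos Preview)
Your proof is correct, but it takes a somewhat different route from the paper's.

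The paper performs the probabilistic estimate at a \emph{single} scale per generation: it fixes the dyadic grid of mesh $2^{-\lfloor\eta j\rfloor}$ (the ``natural'' spacing of the points $X_n$, $n\in A_j$), defines $L_{j,k}$ as the number of such $X_n$ falling in the $k$-th inflated grid interval, and proves via a binomial tail bound (the inequality $\mathbb{P}(Y>m)\leq (n_0p)^m/m!$) and Borel--Cantelli that almost surely $L_{j,k}\leq Kj^2$ for all $j,k$. The dependence on $J$ is then handled \emph{deterministically}: for $j\leq J$ the ball $B(x,2^{-\eta J}+2^{-j})$ fits inside a single inflated grid interval, giving the bound $Kj^2$; for $j>J$ that ball is covered by $O(2^{\eta(j-J)})$ grid intervals, each contributing at most $Kj^2$ points.

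You instead attack each pair $(j,J)$ directly with a Chernoff bound tailored to the target $k_{j,J}=Kj^2\max(1,2^{\eta(j-J)})$, and use monotonicity of the left-hand side in $J$ to reduce to $O(j)$ values of $J$ per generation before union-bounding. Your key algebraic input $m\,r^*\leq 2j\max(1,2^{\eta(j-J)})$ is correct (it follows from $2^{\eta j}r^*=\max(2^{\eta(j-J)},2\cdot 2^{-(1-\eta)j})\leq 2\max(1,2^{\eta(j-J)})$), and the resulting failure probability $Cj\,2^j(2j)^{-Kj^2}+K_0/j^2$ is indeed summable.

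The paper's route is slightly more economical: it separates the single probabilistic estimate from a purely combinatorial covering step, so the union bound runs only over $2^{\lfloor\eta j\rfloor}$ intervals rather than over $O(j)\cdot 2^j$ pairs. Your route is more direct and avoids the intermediate deterministic reduction, at the cost of tracking one more parameter inside the Borel--Cantelli argument. Both are perfectly valid.
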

 
\begin{proof}
We first work on the dyadic grid. 
Let $j\in\mathbb{N}$ and $j_\eta=\lfloor\eta j\rfloor$. Observe that  $ [0,1]=\bigcup_{k=0}^{2^{j_\eta}-1} I_{j_\eta,k} $, where $ I_{j_\eta,k}=[k2^{-j_\eta},(k+1)2^{-j_\eta}].  
$
For $k\in\{0,1,...,2^{j_\eta}-1\}$,  and set
\begin{align}
\label{defljk}
L_{j,k} = \card\left\{ n\in A_j \ : \ X_n\in I_{j_\eta,k}\pm 2^{-j_\eta+1 } \right\}.
\end{align}

Let us estimate $p_j=\mathbb{P}(\exists k\in\{0,1,...,2^{j_\eta}-1\}: L_{j,k} > j^2).$
Using Bayes'formula,
\begin{align*}
p_j &= & \mathbb{P}(\exists k\in\{0,1,...,2^{j_\eta}-1\},  \ L_{j,k}> j^2 | N_j\in [2^{\eta j (1-\varepsilon_j)},2^{\eta j (1+\varepsilon_j)}])\\ &&\times \mathbb{P}(N_j\in [2^{\eta j (1-\varepsilon_j)},2^{\eta j (1+\varepsilon_j)}])  \\
&&+ \ \ \  \mathbb{P}(\exists k\in\{0,1,...,2^{j_\eta}-1\},  L_{j,k}>  j^2| N_j\notin[2^{\eta j (1-\varepsilon_j)},2^{\eta j (1+\varepsilon_j)}])\\ &&\times\mathbb{P}(N_j\notin[2^{\eta j (1-\varepsilon_j)},2^{\eta j (1+\varepsilon_j)}]).  
\end{align*}
Applying  (\ref{eq3.8}), there exists $K>0$ such that for every $j$,
\begin{equation}\label{eq3.13}
p_j \leq   \sum_{N\in  \{  \lfloor 2^{\eta j (1-\varepsilon_j)} \rfloor ,...,  \lfloor2^{\eta j (1+\varepsilon_j)}\rfloor\}} p_{j,N}  \mathbb{P}(N_j=N)\ +\frac{K}{j^2} , \end{equation}
where for every  integer $N$,
$
p_{j,N}=\mathbb{P}(\exists k\in \{0,1,...,2^{j_\eta}-1\} : L_{j,k} >j^2 |N_j=N).
$
Obviously, $p_{j,N}$ is increasing with $N$, hence   $p_{j} \leq  p_{j, 2^{\lfloor\eta j (1+\varepsilon_j)\rfloor}} +\frac{K}{j^2}$.

Conditioned on $N_j=n_0:=2^{\lfloor\eta j (1+\varepsilon_j \rfloor)} $, the law of  each $L_{j,k}$ is binomial $B(n_0,p)$  with parameters $n_0$ and $p=\mathbb{P}(X_n\in  I_{j_\eta,k}\pm 2^{-j_\eta+1} )$. 

Recall the argument by  Demichel and  Tricot used in Lemma 2.1 of    \cite{Demichel2}: For $Y\sim B(n_0,p)$, then  for every  $m\geq 1$,
\begin{equation*}
\mathbb{P}(Y>m) \leq \frac{(n_0 p)^m}{m!}.
\end{equation*}
In particular, in our case, since $p\leq 3\cdot 2 ^{-j_\eta} \leq 6\cdot 2^{-\eta j }$, one has
\begin{equation*}
\mathbb{P}(L_{j,k} > j^2 | N_j =n_0) \leq \frac{(n_0 p)^{j^2}}{(j^2)!} \leq \frac{( 6\cdot 2^{\eta j (1+   \varepsilon_j ) -\eta j} )^{j^2} }{(j^2)!} = \frac{ (6\cdot j)^{j^2} }{(j^2)!}
\end{equation*}
 Hence, 
$$p_{j,\lfloor 2^{ \eta j (1+\varepsilon_j)}\rfloor} \leq \sum_{k=0}^{2^{j_\eta -1}} \frac{(6\cdot j)^{j^2}}{(j^2)!} \leq \frac{2^{j_\eta}(6\cdot j)^{j^2}}{(j^2)!}.$$
Recallng \eqref{eq3.13}, one concludes that
$$p_j \leq \frac{2^{j_\eta}(6\cdot j)^{j^2} }{(j^2)!} + \frac{K}{j^2}$$
which is the general term of a convergent series.

  Borel-Cantelli lemma gives that almost surely, for all  $j\in\mathbb{N}$ large enough and for every   $k\in\{0,1,..,2^{j_\eta}-1\}$, $L_{j,k}\leq j^2$. So, almost surely, there exists   $K>0$ such that for every $j\geq 1$, for every  $k\in\{0,1,..,2^{j_\eta}-1\}$, $L_{j,k}\leq K j^2$.
 
\medskip

To conclude now,   fix an integer $J$, $x\in \zu$ and $2^{-J-1} \leq r \leq 2^{-J}$. 
Two cases are  distinguished:

\begin{itemize}

\smallskip\item
When $  j\leq J$:  calling again $j_\eta=\lfloor j\eta\rfloor$, the point $x$ belongs to  a unique interval $I_{j_\eta,k_x}$ (for some unique integer $k_x$).   When $n\in A_j$, observe that $T_n(x,2^{-\eta J}) =1$ if and only if $|X_n-x|\leq 2^{-\eta J} + B_n ^{-1/\eta}  \leq  2^{-\eta J} +2^{-j}$.  This may occur only when $X_n\in I_{j_\eta,k_x}\pm  (2^{-\eta J}+2^{-j} ) \subset  I_{j_\eta,k_x}\pm  2\cdot 2^{-j_\eta} $, since $ j   \le J$.

 From the consideration above,   there are at most $K j^2$ points $X_n$, $n\in A_j$, such that $T_n(x,2^{-\eta J}) =1$, hence \eqref{eq3.11}.

\smallskip\item
 When $j >  J$:   As above, when $n\in A_j$,     $T_n(x,2^{-\eta J}) =1$ may occur only if $|X_n-x|\leq 2^{-\eta J} + B_n ^{-1/\eta} \leq  2^{-\eta J} +2^{-j} \leq 2^{- \lfloor \eta J \rfloor +1}  $.      The interval $[x- 2^{- \lfloor \eta J \rfloor +1}, x+2^{- \lfloor \eta J \rfloor +1}]$ is covered by at most $\lfloor 2^{ \eta(j-J)+3}\lfloor$ intervals $I_{\lfloor j\eta\rfloor,k}$, and each of these intervals contain at most $K j^2$ points $X_n$. So, $T_n(x,2^{-\eta J}) =1$ for at most $Kj^2 2  ^{ \eta(j-J)+3}  2$ integers   $n\in A_j$. Hence the result  \eqref{eq3.11}.
\end{itemize} \end{proof}

  Observe that the degenerate case $J=+\infty$ also holds in this case, i.e.  almost surely, there exists $K>0$ such that for every $x \in [0,1]$, for every  $j \in\mathbb{N}$,     one has  
\begin{eqnarray}\label{eq3.11.bis}
\sum_{n\in A_j} T_n (x) =\sum_{n\in A_j} T_n (x,0)  \leq  K  j^2.
\end{eqnarray}

%%%%%%%%%%%%%%%%%%%%%%%
%%%%%%%%%%%%%%%%%%%%%%%
%%%%%%%%%%%%%%%%%%%%%%%
%%%%%%%%%%%%%%%%%%%%%%%
%%%%%%%%%%%%%%%%%%%%%%%
%%%%%%%%%%%%%%%%%%%%%%%
%%%%%%%%%%%%%%%%%%%%%%%
%%%%%%%%%%%%%%%%%%%%%%%
%%%%%%%%%%%%%%%%%%%%%%%
%%%%%%%%%%%%%%%%%%%%%%%
%%%%%%%%%%%%%%%%%%%%%%%
%%%%%%%%%%%%%%%%%%%%%%%
%%%%%%%%%%%%%%%%%%%%%%%
%%%%%%%%%%%%%%%%%%%%%%%
\section{Distribution of isolated pulses} \label{sec:5}

There may be several pulses $\psi_n$  with $n\in A_j$ whose support intersect  each other, creating unfortunate  irregularity compensation phenomena and making the estimation of local increments of the process $F$ difficult. In order to circumvent this issue,  the knowledge on the distribution of the $\psi_n$'s shall be improved.

For this, fix $\gamma \in [1,1/\eta]$ and   $p_0 \in \N$ so large that  
\begin{equation}
\label{choice-p}
p_0 >   \frac{3+3 \alpha}{1-\alpha\eta} .
\end{equation}
 Let us introduce  for any $j\in\mathbb{N}$ the sets
\begin{eqnarray}\label{eq5.1}
&\widetilde{A}_j=\bigcup_{j'=\lfloor (1-p\eta\varepsilon_j)j \rfloor}^{\lfloor \gamma j \rfloor} A_{j'} \ \ \ \mbox{ and }\ \ \ \widetilde{N}_j=\card(\widetilde{A}_j)\\\label{eq5.2}
&\ii _j=\{ n\in A_j \ : \ \forall m\in\widetilde{A}_j,\ n\neq m ,\ B(X_n,B_n^{-\frac{1}{\eta}})\cap B(X_m,B_m^{-\frac{1}{\eta}})=\varnothing \}
\end{eqnarray}
The elements of $\ii_j$ are integers $n\in A_j$ such that the support of $\psi_n$ does not intersect any support of $\psi_m$ for $m\in\widetilde{A}_j $ with $m\neq n$.

\begin{defn}\label{def9}
A point $X_n$ with $n\in \ii _j $ is called an isolated point.
\end{defn}

The distribution of the isolated points $\{X_n\}_{n\in \mathcal{I}_j }$ is further investigated. Indeed,  as said above,   such information is key  to obtain upper and lower bounds for the H\"older exponent of $F$ at any point $x$ (see Sections \ref{sec:6} and \ref{sec:7}).  To describe the distribution of $\{X_n\}_{n\in \mathcal{I}_j }$, consider the  two limsup sets 
\begin{eqnarray}
\label{defgdelta}G_\delta & = &\limsup_{j\rightarrow +\infty} \bigcup_{n\in A_j} B(X_n,B_n^{-\delta})  \\
\label{defg'delta} G'_\delta & = &\limsup_{j\rightarrow +\infty} \bigcup_{n\in \ii _j} B(X_n,B_n^{-\delta(1-\widetilde{\varepsilon}_j)}), \ \ \mbox{ where $\widetilde{\varepsilon}_j=\log_2 (16j \log_2 j)/(\eta j)$. }
\end{eqnarray}
\begin{rem}
\label{reminclusions}
Note that  as soon as $\delta >\delta'$, $G_\delta\subset G_{\delta ' }$ and $ G'_\delta\subset G'_{\delta ' }$.  
\end{rem}

In the next sections, it is proved that  $ G_\delta $ contains points whose pointwise  H\"older exponent of $F$ is lower-bounded by $\alpha /\delta$ and $G'_{\delta}$ points whose pointwise H\"older exponent of $F$ is upper-bounded by $\alpha /\delta$. The idea is that on the support of an isolated pulse, the process $F$ has large local oscillations, thus forming points around which $F$  possesses a low regularity. 
 
\begin{figure}
    \centering
    \includegraphics[scale=0.7]{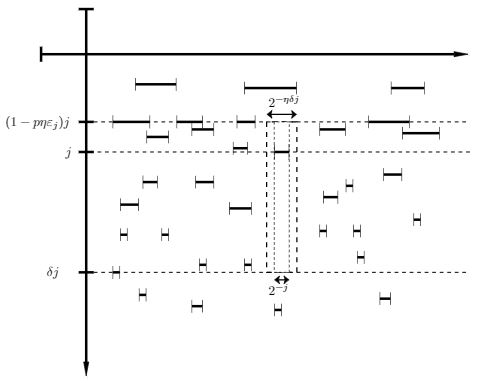}
    \caption{Representation of pulses supports in $\ii _j$.}
\end{figure}

It is a classical result (see \cite{ref29,ref9}) that almost surely, 
\begin{equation}
\label{covering1}
[0,1] =  \limsup_{j\rightarrow +\infty} \bigcup_{n\in A_j} B(X_n,B_n^{-   (1-\widetilde{\varepsilon}_j)})   .
\end{equation}
Hence, almost surely, every $x\in [0,1]$  is infinitely many times at distance less than $B_n^{-\eta(1-\widetilde{\varepsilon}_j)}$ from a point $X_n$. 

A more subtle covering theorem is needed, using only isolated points $(X_n)_{n\in \mathcal{I}_ j}$  (instead of $(X_n)_{n\in A_ j}$).
\begin{theorem}\label{prop5.1}
With probability one, $G'_1 = [0,1]$.
\end{theorem}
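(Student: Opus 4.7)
The aim is to strengthen the classical covering \eqref{covering1}, which uses every center $X_n$, $n\in A_j$, into a covering using only isolated centers $n\in\ii_j$. The extra logarithmic factor built into $\widetilde{\varepsilon}_j$ provides just enough slack to pay for restricting to isolated pulses: each dyadic cell $I$ of length $\approx r_j:=2^{-\eta j(1-\widetilde{\varepsilon}_j)}$ contains $\approx r_j N_j \approx 16 j \log_2 j$ centers from $A_j$, a definite fraction of which will turn out to be isolated.

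\textbf{Step 1 (Density of isolated pulses).} Fix $n\in A_j$ and condition on $X_n=x$ (taken in the interior of $[0,1]$) and on the counts $\{N_{j'}\}_{j_1 \le j' \le j_2}$ with $j_1 = \lfloor (1-p\eta\ep_j)j\rfloor$ and $j_2 = \lfloor \gamma j\rfloor$. Under this conditioning, the $X_m$ for $m\in \widetilde{A}_j\setminus\{n\}$ are i.i.d.\ uniform on $[0,1]$, and the event $n\in\ii_j$ is the intersection of the \emph{independent} events $\{X_m\notin B(x,B_n^{-1/\eta}+B_m^{-1/\eta})\}$. A product calculation combined with $N_{j'}\le 2^{\eta j'(1+\ep_{j'})}$ (Lemma~\ref{lem3.3}) and the constraint $\gamma\le 1/\eta$ yields
$$
\mathbb{P}(n\in\ii_j\mid X_n=x)\ \ge\ \exp\!\Bigl(-C\sum_{j'=j_1}^{j_2} N_{j'}(2^{-j}+2^{-j'})\Bigr)\ \ge\ e^{-C_0},
$$
for a constant $C_0=C_0(\eta,\gamma)$ uniform in $j$ and in $x$. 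Integrating over $x$, for every interval $I\subset[0,1]$ one obtains $\mathbb{E}[\card\{n\in\ii_j:X_n\in I\}]\geq c_0\,|I|\,N_j$.

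\textbf{Step 2 (Concentration per cell).} Choose $J=J(j)$ so that $2^{-J-1}<r_j\le 2^{-J}$ and consider the dyadic intervals $I=I_{J,k}\subset[0,1]$. By Step 1, $\mathbb{E}[\card\{n\in\ii_j:X_n\in I\}]\gtrsim j\log_2 j$. The goal is to prove
$$\mathbb{P}\!\left(\card\{n\in\ii_j:X_n\in I\}=0\right)\ \le\ e^{-c_1 j\log j},$$
which summed over the $2^J\lesssim 2^{\eta j}$ cells at level $J$ and over $j$ gives a convergent series. The proof strategy is to condition on all pulses of $\widetilde{A}_j$ at scales $j'\ne j$: the ``forbidden zone'' of $I$ (positions $x$ for which a virtual $A_j$-pulse at $x$ would fail to be isolated from a higher- or lower-scale pulse) is a union of random intervals whose expected total length is bounded by a definite multiple of $|I|$ (the hypothesis $\gamma\le 1/\eta$ is precisely what makes this expected length $O(|I|)$, and Lemma~\ref{lem3.6} controls overlaps). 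A Bienaym\'e--Tchebychev estimate then gives that the ``free'' part of $I$ has measure $\ge \delta|I|$ with probability $1-o(1)$ for some $\delta>0$. Conditional on this, the centers $X_n$, $n\in A_j$, with $X_n\in I$ are i.i.d.\ uniform on $I$, and the number falling in the free region is binomial with mean $\gtrsim \delta\cdot 16 j\log_2 j$; a standard Chernoff bound gives the required exponential decay.

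\textbf{Step 3 (Borel--Cantelli, conclusion, main obstacle).} Step~2 makes $\sum_j\sum_k \mathbb{P}(\card\{n\in\ii_j: X_n\in I_{J(j),k}\}=0)$ convergent, so Borel--Cantelli guarantees that almost surely, for all $j$ large and every $k$, there exists $n\in\ii_j$ with $X_n\in I_{J(j),k}$. Given any $x\in[0,1]$, pick the cell $I_{J(j),k}$ containing $x$ and a corresponding isolated $n$; then $|X_n-x|\le 2^{-J(j)}\le r_j\le B_n^{-(1-\widetilde{\varepsilon}_j)}$, so $x\in B(X_n, B_n^{-(1-\widetilde{\varepsilon}_j)})$. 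As this happens for infinitely many $j$, $x\in G'_1$, whence $G'_1=[0,1]$. The principal obstacle is Step~2: the indicators $\mathbf{1}_{n\in\ii_j}$ for distinct $n$ are correlated through common higher-scale blockers, which rules out a naive Chernoff argument on $\card(\ii_j\cap\{X_\cdot\in I\})$; the conditioning trick decouples them, but then one must control the random forbidden set. The tight case is $\gamma=1/\eta$, where ensuring $\delta>0$ uniformly in $j$ is the delicate quantitative point.
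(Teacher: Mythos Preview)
Your strategy is genuinely different from the paper's, and it contains a real gap in Step~2.

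\medskip

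\textbf{The gap.} You claim
\[
\mathbb{P}\bigl(\card\{n\in\ii_j:X_n\in I\}=0\bigr)\ \le\ e^{-c_1 j\log j},
\]
but your argument actually yields only
\[
\mathbb{P}\bigl(\text{free part of }I<\delta|I|\bigr)\ +\ \mathbb{P}\bigl(\text{no }A_j\text{-center in the free part}\,\big|\,\text{free part}\ge\delta|I|\bigr).
\]
The second term is indeed $e^{-c j\log j}$ by Chernoff, but the first is only ``$o(1)$'' via Bienaym\'e--Tchebychev. You then take a union bound over $\sim 2^{\eta j}$ cells $I_{J(j),k}$; the contribution $2^{\eta j}\cdot o(1)$ is not summable in $j$, so Borel--Cantelli in Step~3 does not apply. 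Chebyshev on the measure of a Poisson union of intervals gives at best polynomial decay, nowhere near the exponential-in-$j$ rate you need. In the boundary case $\gamma=1/\eta$ the situation is worse: summing $N_{j'}\cdot 2(2^{-j}+2^{-j'})$ over $j'\le \gamma j$ gives an expected forbidden fraction of order $j$, not $O(1)$, so even the existence of a uniform $\delta>0$ fails (and your Step~1 constant $C_0$ blows up for the same reason). A secondary issue: landing in the ``free region'' only rules out blocking by pulses at scales $j'\ne j$; mutual blocking among $A_j$-centers in $I$ is not addressed (it is rare, since the same-scale exclusion radius $2^{-j}$ is tiny compared to $r_j/(j\log j)$, but it must be argued).

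\medskip

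\textbf{How the paper proceeds instead.} The paper works at the \emph{natural density scale} $2^{-\lfloor\eta j\rfloor}$ rather than at the target scale $r_j$. It takes well-separated cells $V\in D_j$ (spacing $8\cdot 2^{-\lfloor\eta j\rfloor}$) and considers, for each $V$, the event $\mathcal{A}_j(V)$ that some $X_n\in V$ with $n\in A_j$ has no other $X_m$, $m\in\widetilde A_j$, within a radius of order $B_n^{-1-\gamma}$; this event implies $n\in\ii_j$. Because the exclusion radius is $\ll 2^{-\eta j}$, the events $\{\mathcal{A}_j(V)\}_{V\in D_j}$ are \emph{independent} Bernoulli trials, with a common parameter $p_j(1+\gamma)$ bounded below by a constant $k(1+\gamma)>0$ (cited from an earlier paper). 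One then shows that among any $\lfloor j\log j\rfloor$ \emph{consecutive} cells of $D_j$ at least one succeeds, except with probability $\le 2^{\eta j}(1-k(1+\gamma))^{j\log j}$, which is summable. Passing to a sparse subsequence $(j_p)$ to make the events across $p$ independent, Borel--Cantelli yields infinitely many $j_p$ for which the isolated centers, thickened to radius $8 j_p\log j_p\cdot 2^{-\eta j_p}\le B_n^{-(1-\widetilde\varepsilon_{j_p})}$, cover $[0,1]$.

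\medskip

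The key structural difference is that the paper builds independence into the cells \emph{a priori} (via spatial separation of a Poisson process) and then runs the concentration argument on blocks of $j\log j$ i.i.d.\ Bernoulli trials; you try to run concentration inside each large cell, where the relevant indicators are correlated and the controlling ``free-region'' event has no sufficiently sharp tail.
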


\begin{proof}
For $j\in\mathbb{N}$,   define the following set
$$
D_j = \{ [8k2^{-\lfloor\eta j\rfloor}, (8k+1) 2^{-\lfloor\eta j\rfloor}] \ : \ 0\leq 8k < 2^{\lfloor\eta j\rfloor} -1\} .
$$
Obviously, $\card  (D_j ) \sim 2^{\lfloor \eta j\rfloor}/8$.

For all $V\in D_j$ (necessarily, $V\subset [0,1]$),  consider the following event:
\begin{eqnarray}
\label{defa}
&& \ \ \ \ \ \mathcal{A}_j (V) \\
\nonumber&& =  \left\{\exists \, n\in A_j \ \mbox{ such that } X_n\in V \ \mbox{ and } \ B(X_n, 2B_n^{-1-\gamma })\bigcap \bigcup_{m\in \widetilde{A}_j}\{X_m\}=\{X_n\} \right\} 
\end{eqnarray}

\begin{lem}
\label{lemiso}
If $\mathcal{A}_j (V) $ is realized, then a point  $X_n$ given by \eqref{defa} is isolated in the sense of Definition \ref{def9}.
\end{lem}
\begin{proof}
When $\mathcal{A}_j (V)$ is realized, the point $X_n$ is such that for every $m\in\widetilde{A}^{j}$,   $X_m\notin B(X_n,2B_n^{-1-\gamma})$. 

Further, recall that  $ 2^{(j-1) \eta }\leq B_n\leq 2^{-j \eta}$, and that  $B_n^{-1/\eta} < B_n^{-1-\gamma} $  by our choice for  $\gamma$.
In addition, observe that when    $m\in\widetilde{A}_{j}$ for $j$ sufficiently large,
\begin{align*}
 B_m ^{-1/\eta} & \leq 2^{-(\lfloor (1-p\eta\varepsilon_j)j \rfloor-1)/\eta } \leq  j^{p\eta} 2^{- j/\eta+1/\eta    } \leq B_n ^{-1-\gamma }  ,
 \end{align*}  
again due to our choice for  $\gamma$. 
 
 What precedes proves that  $B(X_n,B_n^{-1/\eta}) \cap B(X_n,B_n^{-1/\eta}) =\emptyset$, hence $X_n$ is isolated. 
 \end{proof}
 
Our goal is now to prove that these events $\mathcal{A}_j (V)$  are realized very frequently.

The restrictions of  the point Poisson  process $\{(X_n,B_n \}_{n\in\mathbb{N}}$ on $V\times [1,+\infty]$, or equivalently of $\{(X_n,B_n^{-\frac{1}{\eta}})\}_{n\in\mathbb{N}}$ on $V\times [0,1]$, on the  dyadic intervals $V\in D_j$, are independent. Moreover, the intervals in $D_j$ being pairwise distant from at least $2^{1-\eta j}$, and since $B_n^{-1-\gamma }\leq B_n^{-1}\leq 2^{1-\eta j}$,  two balls $B(X_n,2B_n^{-1-\gamma  })$ with $X_n\in V$ and $B(X_m,B_m^{-1-\gamma})$  with $X_m \in  V' \neq V$ (with $n,m\in A_j$) do not intersect.  As a conclusion, the events $\mathcal{A}_j (V)$ for $V\in D_j$ are independent.

We introduce the set of (random) intervals
$$Q_j = \{ V\in D_j \ :  \ \mathcal{A}_j (V) \mbox{ is true }\}.$$
Let $V\in D_j$ with $V\subset[0,1]$, and  consider the random variable $T_j (V) =\mathds{1}_{\mathcal{A}_j (V)}$. From the above considerations,      the random variables $(T_j (V))_{V\in D_j}$ are i.i.d. random Bernoulli variables   with common parameter $p_j (1+\gamma )=\mathbb{P}(\mathcal{A}_j (V) \ \mbox{ is true})$. Since $\card(Q_j) = \sum_{V\in D_j} T_j (V)$,  $\sum _{V\in D_j} T_j (V) \sim \mathcal{B}(\card (D_j) ,p_j(1+\gamma))$, a binomial law with parameters $\card(D_j)$ and $p_j(1+\gamma)$.

The parameter is denoted   $p_j(1+\gamma)$ because, the law of the random variables  $X_n$ and $B_n$  being given, it depends only on $\gamma$ and $j$.
To go further, we call for the following lemma that is proved in  \cite{ref29}, Lemma 28 (see also  \cite{ref30}).

\begin{lem}
There exists a continuous function $k:(1,+\infty)\rightarrow ]0,1[$ such that for any $j\in\mathbb{N}^*$, $p_j(\delta) \geq k(\delta)>0$.
\end{lem}
 Let $(j_p)_{p\in\mathbb{N}^*}$ be  the increasing sequence of  integers defined iteratively by $j_1=1+\lfloor p\eta\varepsilon_1\rfloor$ and $j_{p+1}=\lfloor 2(1/\eta+1)j_p+1 \rfloor$. By construction,  $\widetilde{A}_{j_p}\cap\widetilde{A}_{j_{p+1}}=\varnothing$.

\medskip

Two intervals $V$, $V' \in D_j $ are called  {\em successive} when writing $V=[8k2^{-\lfloor\eta j\rfloor}, (8k+1) 2^{-\lfloor\eta j\rfloor}]$, then  either $V' = [8(k+1)2^{-\lfloor\eta j\rfloor}, (8(2+\gamma)+1) 2^{-\lfloor\eta j\rfloor}]$  or  $V' = [8(k-1)2^{-\lfloor\eta j\rfloor}, 8k 2^{-\lfloor\eta j\rfloor}]$.

Next lemma shows that it is highly likely that amongst any set of $j_p\log j_p$ successive intervals in $D_j$, at least one of them, say $V$, satisfies $\mathcal{A}_j (V) $.

\begin{lem}
\label{lemep}
For all $p\in\mathbb{N}$,   define the events $\mathcal{E}_p$ 
 by
\begin{eqnarray*}
\mathcal{E}_p&=&\{\mbox{for all } (V_1,...,V_{\lfloor j_p \log j_p \rfloor}) \mbox{ successive intervals of }  {D}_{j_p} \nonumber\\
& &\ \exists k\in\{1,...,\lfloor j_p\log j_p\rfloor\}  \ \mbox{ such that } \mathcal{A}_{j_p} (V_k) \ \mbox{ is true}\} , 
\end{eqnarray*}
Then $\mathbb{P}(\limsup\limits_{p \rightarrow +\infty} \mathcal{E}_p  ) = 1 $.   
\end{lem}

\begin{proof}
It is  easily  checked that the   $\{\mathcal{E}_p\}_{p\in\mathbb{N}}$ are mutually  independent by our choice for $(j_p)_{p\geq 1}$.
 There is a constant $K> 0$ such that
\begin{eqnarray*}
\mathbb{P}(\mathcal{E}^c_p) 
&\leq & \sum_{i=1}^{\card(D_{j_n})} \prod_{k=1}^{\lfloor j_n\log j_n \rfloor} \mathbb{P}(\mathcal{A}_{j_n} (V_k) \mbox{ is false}) \nonumber\\
& \leq & K2^{\eta j_p } (1-p_j(1+\gamma))^{j_p \log j_p} \nonumber\\
& \leq & K2^{\eta j_p} (1-k(1+\gamma))^{j_p \log j_p}. \nonumber
\end{eqnarray*}

By construction, $j_p>\!\!\!> p$ and $0< 1-k(1+\gamma)<1$. This implies  that for $p$ large enough, there exists $K'>0$ such that $\mathbb{P}(\mathcal{E}^c_p)\leq K'e^{- p}$, and so  $ \mathbb{P}(\mathcal{E}_p)\geq 1- K'e^{- p} $.

In particular, $\sum\limits_{p\in\mathbb{N}}\mathbb{P}(\mathcal{E}_ p )=+\infty$, and  Borel-Cantelli's lemma  yields  the result.
 \end{proof}

 Let $p$ be such that $\mathcal{E}_p$ is realized (this happens for an infinite number of $p$'s). 
  
Soit $V\in D_{j_p}$ such that $A_j  (V)$ holds true. Hence   $V$ contains an isolated point, by Lemma  \ref{lemiso}.

From the $\mathcal{E}_p$'s and Lemma \ref{lemep},  it follows that amongst any $\lfloor {j_p} \log {j_p} \rfloor$ consecutive intervals in $D_{j_p}$ there is at least one interval that contains an isolated point.  Consequently,  
 $$\bigcup\limits_{n\in \ii _{j_p}} B(X_n,8 j _p \log {j_p}2^{-\eta {j_p}})$$ 
 forms a covering of $[0,1]$. Since this occurs for an infinite number of integers ${j_p}$, and recalling \eqref{defg'delta} and the definition of $\widetilde \ep_j$, we   conclude that almost surely, 
 $$ [0,1] =   \limsup\limits_{j\rightarrow +\infty} \bigcup\limits_{n\in \ii _j} B(X_n,  8j\log {j}   2^{-\eta   j}) \subset \limsup_{j\rightarrow +\infty} \bigcup_{n\in \ii _j} B(X_n,B_n^{- (1-\widetilde{\varepsilon}_j)})= G'_1  ,$$
since $B_n \geq 2^{(j_p-1)/\eta}$ when $n \in \ii _{j_p}$. Hence the result.
\end{proof}

%%%%%%%%%%%%%%%%%%%%%%%
%%%%%%%%%%%%%%%%%%%%%%%
%%%%%%%%%%%%%%%%%%%%%%%
%%%%%%%%%%%%%%%%%%%%%%%
%%%%%%%%%%%%%%%%%%%%%%%
%%%%%%%%%%%%%%%%%%%%%%%
%%%%%%%%%%%%%%%%%%%%%%%
%%%%%%%%%%%%%%%%%%%%%%%
%%%%%%%%%%%%%%%%%%%%%%%
\section{Uniform regularity} \label{sec:4}

In this section, the uniform H\"older regularity of $F$  is investigated.

Recall that $\alpha \in ]0,1[$ and    $\psi$  is Lipschitz.
  
  An important tool for the following proofs is the wavelet transform. It is known since Jaffard's works that wavelets provide a convenient method to analyse pointwise regularity of functions.
   \begin{defn}
Let $\phi: \R\to\R$ be a compactly supported, non-zero function, with a vanishing integral: $\displaystyle \int_\R \phi(u)du=0$.

The continuous wavelet transform associated with $\phi$ of a function $f\in L^2(\mathbb{R})$ is  defined  for every couple   $(s,t)\in\mathbb{R}_{+}^*\times\mathbb{R}$  by
\begin{equation}\label{eq7.1}
W_f (s,t)=\frac{1}{\sqrt{s}} \int_{\mathbb{R}} f(x)\phi_{s,t} (x) dx \ \ \mbox{ where }\ \ \phi_{s,t}(x)=\phi\left(\frac{x-t}{s}\right).
\end{equation}
\end{defn}

Recall here the theorem of  Jaffard \cite{ref24} and Jaffard-Meyer \cite{ref26} relating the decay rate of continuous wavelets and uniform regularity for a function $f$.

\begin{theorem}\label{thm7.11}
Let $H\in\mathbb{R}_+^*$, $f\in L^\infty_{loc} (\mathbb{R})$, and $\psi$ be sufficiently regular (if $\alpha\in ]0,1[$ then $\psi$ is a Lipschitz function, otherwise $\psi \in C^{\lfloor\alpha\rfloor +1} (\RR)$). Then,  the mapping     $x\mapsto |x|^H |\log|x|| ^\beta $ is a uniform modulus of continuity for $f$   if and only if there exists a constant $K>0$ such that 
$$\forall (s,t)\in\mathbb{R}_+^*\times\mathbb{R}, \ |W_f (s,t)|\leq K s^{H+\frac{1}{2}} |\log |s| |^\beta.$$
\end{theorem}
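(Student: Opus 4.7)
My plan is to treat the two implications separately: the forward direction is a direct calculation exploiting the vanishing mean of $\phi$, while the converse is obtained by reconstructing $f$ from its wavelet transform and splitting the increment $f(x)-f(y)$ at the dyadic scale $|x-y|$.

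\textbf{Direct implication.} Assuming the uniform modulus $|h|^H |\log|h||^\beta$ on $f$, I use $\int_\R \phi = 0$ to rewrite
$$W_f(s,t) = \frac{1}{\sqrt{s}} \int_\R (f(x)-f(t)) \, \phi\!\left(\frac{x-t}{s}\right) dx,$$
change variables $u=(x-t)/s$, and use the compact support of $\phi$ together with the inequality $(|\log s|+|\log|u||)^\beta \leq C(|\log s|^\beta + |\log|u||^\beta)$ to bound
$$|W_f(s,t)| \leq K \sqrt{s} \cdot s^H \int |u|^H \bigl(|\log s|^\beta + |\log|u||^\beta\bigr) |\phi(u)| \, du \leq K' s^{H+\frac{1}{2}} |\log s|^\beta$$
for $s$ small, the remaining integrals being finite thanks to $H>0$ and the compact support of $\phi$.

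\textbf{Converse implication.} Starting from $|W_f(s,t)| \leq K s^{H+1/2} |\log s|^\beta$, I invoke an admissible reconstruction formula (Calder\'on's reproducing identity, or equivalently a wavelet basis whose generator lies in $C^{\lfloor H \rfloor+1}$ with sufficiently many vanishing moments) to write $f = \sum_{j \in \mathbb{Z}} f_j$, where $f_j$ is localized at frequency scale $2^{j}$. From the wavelet bound and the stability of the reconstruction, I extract $\|f_j\|_\infty \lesssim 2^{-jH} j^\beta$ and, by differentiating the analyzing function and rerunning the same estimate, a Bernstein-type bound $\|f_j'\|_\infty \lesssim 2^{j(1-H)} j^\beta$. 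For $h=|x-y|$ small, I pick $J$ with $2^{-J} \sim h$ and estimate
$$|f(x)-f(y)| \leq \sum_{j \leq J} h \|f_j'\|_\infty + 2 \sum_{j > J} \|f_j\|_\infty \lesssim h \cdot 2^{J(1-H)} J^\beta + 2^{-JH} J^\beta \lesssim h^H |\log h|^\beta,$$
which, being uniform in $x,y$, establishes the desired modulus of continuity.

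The principal obstacle lies in the converse direction: the continuous wavelet bound has to be transferred to sharp pointwise estimates on the dyadic blocks $f_j$ and their derivatives, uniformly in the translation parameter and without accumulating spurious logarithmic factors when one passes from a continuous scale $s$ to a discrete one $2^{-j}$. This forces one to choose $\phi$ with strictly more regularity and vanishing moments than $H$ itself, so that the reconstruction is absolutely convergent in $L^\infty$ and the Bernstein inequality for $f_j'$ applies cleanly. Once these localization estimates are in hand, the dyadic splitting is routine and the two power-of-$h$ contributions combine into the single bound $h^H |\log h|^\beta$.
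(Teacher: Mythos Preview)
The paper does not actually prove this theorem: it is stated without proof and attributed to Jaffard \cite{ref24} and Jaffard--Meyer \cite{ref26} (``Recall here the theorem of Jaffard and Jaffard--Meyer\ldots''). There is therefore no in-paper argument to compare against.

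That said, your sketch is essentially the standard proof one finds in those references. The forward direction is exactly right: subtract $f(t)$ using $\int\phi=0$, change variables, and use the compact support of $\phi$ to control the $|\log|u||^\beta$ contribution. For the converse, the Littlewood--Paley/Calder\'on decomposition with Bernstein estimates on the blocks, followed by the split at scale $2^{-J}\sim h$, is the canonical route; Jaffard--Meyer organize it via a discrete wavelet basis rather than the continuous reproducing formula, but the mechanism is the same. Your closing remark about needing $\phi$ with regularity and vanishing moments strictly exceeding $H$ is the correct identification of where the care lies. One small point: for $H\geq 1$ the increment $f(x)-f(y)$ must be replaced by a higher-order finite difference (or one subtracts a Taylor polynomial), since otherwise the low-frequency sum $\sum_{j\leq J} h\|f_j'\|_\infty$ does not converge; you implicitly restrict to $H<1$ in the final display, which matches how the paper uses the result but not the full range stated.
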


Next proposition deals with the uniform regularity of $F$.

\begin{proposition}\label{prop4.2}
Almost surely, for $\alpha\in \mathbb{R}_+^*\backslash\mathbb{N}$, $\eta\in\mathbb{R}_+^*$, $\alpha\eta <1$ and $\psi$ sufficiently regular  as in Theorem \ref{thm7.11}.  Almost surely, there exists $K>0$ such that  for any $(s,t)\in [0,1]^*\times\mathbb{R}$
$$|W_F (s,t) | \leq K s^{\alpha\eta+\frac{1}{2}} |\log_2 (s)|^{2+\alpha}.$$
Therefore, item (i) of Theorem \ref{mainth2} holds true.
\end{proposition}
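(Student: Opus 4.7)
The plan is to bound the continuous wavelet transform $W_F(s,t)$ pulse by pulse and then regroup the pulses according to the dyadic scale index $j$ from \eqref{eq3.4}. By linearity,
\[
W_F(s,t) \;=\; \sum_{n\geq 1} C_n^{-\alpha}\, W_{\psi_n}(s,t),
\]
and since the support of $\psi_n$ is the ball $B(X_n,a_n)$ with $a_n=B_n^{-1/\eta}\in [2^{-j},2^{-(j-1)})$ for $n\in A_j$, each term vanishes unless $T_n(t,s)=1$. Two regimes appear for the non-vanishing contributions. When $s\leq a_n$, I would use the zero mean of $\phi$ to rewrite $\int\psi_n\phi_{s,t}=\int(\psi_n(x)-\psi_n(t))\phi_{s,t}(x)\,dx$; combined with the Lipschitz bound $|\psi_n(x)-\psi_n(t)|\leq \|\psi\|_{\mathrm{Lip}} a_n^{-1}|x-t|$ and the change of variable $y=(x-t)/s$, this yields $|W_{\psi_n}(s,t)|\leq C s^{3/2}2^{j}$. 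When $s>a_n$, a crude $L^1$--$L^\infty$ estimate gives $|W_{\psi_n}(s,t)|\leq C s^{-1/2}\|\psi_n\|_1 \leq C s^{-1/2}2^{-j}$.

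Next, I would fix $s\in(0,1]$ and let $J$ be the integer with $2^{-J-1}<s\leq 2^{-J}$. By Lemma \ref{lem3}, $C_n^{-\alpha}\leq j^{\alpha}2^{-\alpha\eta j}$ uniformly for $n\in A_j$ and $j$ large, and by Lemma \ref{lem3.6} applied to a radius $\simeq s$, the number of pulses at scale $j$ that meet the support of $\phi_{s,t}$ is at most $Kj^{2}\max(1,2^{\eta j}s)$. Plugging these estimates into the sum over $n$ and splitting in $j$ produces three regimes. For $j\leq J$ (wavelet narrower than the pulse), the per-scale contribution is $Cj^{2+\alpha}s^{3/2}2^{j(1-\alpha\eta)}$, which sums to $C|\log_2 s|^{2+\alpha}s^{1/2+\alpha\eta}$. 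For $J<j\leq J/\eta$ (wide wavelet, still at most $Kj^{2}$ intersecting pulses), the per-scale contribution is $Cj^{2+\alpha}s^{-1/2}2^{-j(1+\alpha\eta)}$, dominated by $j=J+1$ and summing again to $C|\log_2 s|^{2+\alpha}s^{1/2+\alpha\eta}$. For $j>J/\eta$ (wide wavelet, with the overlap factor $2^{\eta j}s\gg 1$), the per-scale contribution is $Cj^{2+\alpha}s^{1/2}2^{-j(1-\eta(1-\alpha))}$, whose geometric sum is bounded by $C|\log_2 s|^{2+\alpha}s^{-1/2+\alpha+1/\eta}$; a short computation shows the inequality $-\tfrac12+\alpha+\tfrac1\eta\geq \tfrac12+\alpha\eta$, equivalent to $(1-\eta)(\tfrac1\eta+\alpha)\geq 0$, which holds since $\eta\in(0,1)$. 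Summing the three bounds yields
\[
|W_F(s,t)|\leq K s^{\alpha\eta+1/2}\bigl|\log_2 s\bigr|^{2+\alpha},
\]
as claimed.

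The main obstacle lies in the very small pulses regime $j>J/\eta$: Lemma \ref{lem3.6} forces us to pay the overlap factor $2^{\eta j}s\gg 1$, and one must verify that this growth is absorbed by the decay $2^{-\alpha\eta j}$ coming from the uniform lower bound on the $C_n$'s. The two other regimes are essentially the standard matching between the wavelet width and the pulse width in a single-atom estimate. Once the wavelet bound is established, item (i) of Theorem \ref{mainth2}---that $h\mapsto |h|^{\alpha\eta}|\log_2 h|^{2+\alpha}$ is a uniform modulus of continuity of $F$---follows immediately from Theorem \ref{thm7.11} applied with $H=\alpha\eta$ and $\beta=2+\alpha$.
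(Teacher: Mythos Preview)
Your proposal is correct and follows essentially the same approach as the paper: the paper expands $W_F(s,t)=\sum_n C_n^{-\alpha}d_n(s,t)$, invokes the single-pulse bound $|d_n(s,t)|\leq Ks^{1/2}\min\{sB_n^{1/\eta},s^{-1}B_n^{-1/\eta}\}T_n(t,s)$ (which you derive directly from the zero mean of $\phi$ and an $L^1$--$L^\infty$ estimate, whereas the paper quotes it as Lemma~\ref{lem7.1} from Demichel's thesis), and then splits into the same three $j$-ranges using Lemmas~\ref{lem3} and~\ref{lem3.6}. Your parametrization $s\sim 2^{-J}$ differs from the paper's $s\sim 2^{-\eta J}$ only by relabeling, so your cutoffs $J$ and $J/\eta$ coincide with the paper's $\eta J$ and $J$; the three estimates and the final appeal to Theorem~\ref{thm7.11} are identical.
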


\begin{proof}
Let $(s,t)\in\mathbb{R}_{+}^*\times\mathbb{R}$. Note that the wavelet transform $W_F$ of $F$ can be expanded in
$$
W_F (s,t) =\frac{1}{\sqrt{s}} \int_{\mathbb{R}} F(x) \phi_{s,t}(x)dx = \sum_{n=1}^{+\infty} C_n^{-\alpha} d_n (s,t)
$$
with
\begin{equation}\label{eq7.5}
d_n (s,t)=\frac{1}{\sqrt{s}}\int_{\mathbb{R}}\psi_n (x)\phi_{s,t}(x)dx.
\end{equation}

A quick computation   allows to bound by above  $|d_n|$ (see Proposition  2.2.1 \cite{ref5}).
\begin{lem}\label{lem7.1} 
There exists $K>0$ such that
\begin{equation}\label{eq7.6}
\forall (s,t)\in  [0,1]\times\mathbb{R},\ |d_n(s,t)|\leq K s^{\frac{1}{2}} \min\{ s B_n^{\frac{1}{\eta}},s^{-1} B_n^{-\frac{1}{\eta}} \} T_n(t,s).
\end{equation} 
\end{lem}  

Fix $t\in \R$ and $0<s<1$.   there exists a unique  $J\in\mathbb{N}$ such that $2^{-\eta J+1}\leq s < 2^{-\eta J}$. 

When $j\leq \eta J$ and  $n\in A_j$, one has   $ \min\{ s B_n^{\frac{1}{\eta}},s^{-1} B_n^{-\frac{1}{\eta}} \}  = s B_n^{\frac{1}{\eta}} \leq s 2^j $. Also, by Lemma \ref{lem3.6}, $\sum_{n\in A_j} T_n (t,2^{-\eta J} ) \leq K j^2$.  So,  by  Lemma  \ref{lem7.1} and \eqref{eq3.9}, there exists a constant $K_1 >0$ (whose value can change from line to line, but does not depend on $s$, $t$, $j$ or $J$) such that 
\begin{eqnarray}
 {\sum_{j=0}^{\lfloor \eta J \rfloor}}\sum_{n\in A_j} C_n^{-\alpha} |d_n (s,t)| & \leq  & K_1 s^{\frac{1}{2}} \sum_{j=0}^{\eta J } 2^{-\alpha\eta(1-\varepsilon_j)j} s 2^j \sum_{n\in A_j} T_n (t,s) \nonumber \\
 & \leq  & K_1 s^{\frac{1}{2}} \sum_{j=0}^{\eta J} 2^{-\alpha\eta(1-\varepsilon_j)j} s 2^j \sum_{n\in A_j} T_n (t,2^{-\eta J} ) \nonumber \\
& \leq & K_1 s^{\frac{3}{2}}\sum_{j=0}^{\eta J} j^{2+\alpha} 2^{(1-\alpha\eta)j} \leq  K' s^{\frac{3}{2}} (\eta J)^{2+\alpha} 2^{(1-\alpha\eta) \eta J} \nonumber\\
& \leq & K_1 s^{\alpha\eta+\frac{1}{2}} |\log_2 (s)|^{2+\alpha}. \nonumber
\end{eqnarray}

When $ \eta J +1\leq j\leq J$, if $n\in A_j$ then  $ \min\{ s B_n^{\frac{1}{\eta}},s^{-1} B_n^{-\frac{1}{\eta}} \}  = s^{-1} B_n^{-\frac{1}{\eta}} \leq  s^{-1}2^{-j}$  and  Lemma \ref{lem3.6} still gives  $\sum_{n\in A_j} T_n (t,2^{-\eta J} ) \leq K j^2$.  Hence, there exists   $K_2 >0$  such that 
\begin{eqnarray}
 {\sum_{j=\lfloor \eta J \rfloor+1}^{ J }}\sum_{n\in A_j} C_n^{-\alpha} |d_n (s,t)| & \leq  & K_2 s^{\frac{1}{2}}\sum_{j=\eta J+1}^{ J } 2^{-\alpha\eta(1-\varepsilon_j)j} s^{-1 } 2^{-j} \sum_{n\in A_j} T_n (t, 2^{-\eta J} ) \nonumber \\
& \leq & K_2 s^{ -\frac{1}{2}} \sum_{j=\eta J+1}^{ J }  j^{2+\alpha} 2^{-(1+\alpha\eta)j} \leq  K_2 s^{-\frac{1}{2}} J^{2+\alpha} 2^{ -(1+\alpha\eta)\eta J} \nonumber\\
& \leq & K_2 s^{\alpha\eta+\frac{1}{2}} |\log_2 (s)|^{2+\alpha}. \nonumber
\end{eqnarray}

Finally, when $j\geq J$,  $ \min\{ s B_n^{\frac{1}{\eta}},s^{-1} B_n^{-\frac{1}{\eta}} \}    \leq  s^{-1}2^{-j}$ and    Lemma \ref{lem3.6} yields this time $\sum_{n\in A_j} T_n (t,2^{-\eta J} ) \leq K j^2 2^{\eta(j-J)}$. Hence, there exists $K_3 >0$ such  that 
\begin{eqnarray}
\sum_{j=J}^{+\infty}\sum_{n\in A_j} C_n^{-\alpha} |d_n (s,t)| & \leq  & K_3 s^{\frac{1}{2}} \sum_{j=J}^{+\infty } 2^{-\alpha\eta(1-\varepsilon_j)j} s^{-1} 2^{-j} \sum_{n\in A_j} T_n (t,2^{-\eta J}) \nonumber \\
& \leq & K_3 s^{-\frac{1}{2}}\sum_{j=J}^{+\infty } j^{2+\alpha} 2^{-(1+\alpha\eta)j}  2^{\eta(j-J)}  \leq K_3 s^{-\frac{1}{2}} J^{2+\alpha} 2^{-(1+\alpha\eta)J} \nonumber\\
& \leq & K_3 s^{\alpha\eta+\frac{1}{2}} |\log_2 (s)|^{2+\alpha} \nonumber.
\end{eqnarray}

The combination of the previous inequalities yields that  for some constant $K>0$,
\[|W_F (s,t)|\leq K s^{\alpha\eta+\frac{1}{2}} |\log_2 (s)|^{2+\alpha}.\]
Theorem \ref{thm7.11} allows to conclude the proof of Proposition \ref{prop4.2}.
\end{proof}

%%%%%%%%%%%%%%%%%%%%%%%
%%%%%%%%%%%%%%%%%%%%%%%
%%%%%%%%%%%%%%%%%%%%%%%
%%%%%%%%%%%%%%%%%%%%%%%
%%%%%%%%%%%%%%%%%%%%%%%
%%%%%%%%%%%%%%%%%%%%%%%
%%%%%%%%%%%%%%%%%%%%%%%
%%%%%%%%%%%%%%%%%%%%%%%
%%%%%%%%%%%%%%%%%%%%%%%
%%%%%%%%%%%%%%%%%%%%%%%
%%%%%%%%%%%%%%%%%%%%%%%
%%%%%%%%%%%%%%%%%%%%%%%
%%%%%%%%%%%%%%%%%%%%%%%
%%%%%%%%%%%%%%%%%%%%%%%
\section{Lower-bound for the H\"older exponent of $F$ via  the study of $G_\delta$} \label{sec:6}

When $\delta\in [1,\frac{1}{\eta}]$, next proposition  yields a lower bound for the pointwise H\"older exponent of $F$ at $x_0$ when $x_0 \notin G_\delta$. 

\begin{proposition}\label{prop6.1}
 Almost surely, for every   $\delta\in (1,\frac{1}{\eta})$, for every  $x_0\notin G_\delta$, there exists $K_{x_0} >0$ such that for any $x$ close to $x_0$,
$$|F(x)-F(x_0)| \leq K_{x_0}  |\log_2 |x-x_0|\ |^{2+\alpha} |x-x_0|^{\frac{\alpha}{\delta}} .$$
Therefore, $h_F(x_0) \geq \frac{\alpha}{\delta}$.
\end{proposition}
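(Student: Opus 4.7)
The plan is to unfold the assumption $x_0 \notin G_\delta$: by definition of the limsup in \eqref{defgdelta}, there is an integer $j_0 = j_0(x_0)$ such that for every $j \geq j_0$ and every $n \in A_j$, one has $|X_n - x_0| > B_n^{-\delta}$. Since $\delta < 1/\eta$ forces $B_n^{-\delta} > B_n^{-1/\eta}$, the point $x_0$ lies outside the support of every such $\psi_n$, so $\psi_n(x_0) = 0$ for $n \in \bigcup_{j \geq j_0} A_j$. Setting $h = |x - x_0|$ with $x$ close to $x_0$, the sum $F(x) - F(x_0)$ thus reduces, at scales $j \geq j_0$, to $\sum_n C_n^{-\alpha}\psi_n(x)$.

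I next introduce the critical scale $J_\delta = \lfloor |\log_2 h|/(\eta\delta) \rfloor$, at which $B_n^{-\delta}$ for $n \in A_j$ becomes comparable to $h$, and split the sum over $j$ into three regimes. In regime (i) $j < j_0$, there are finitely many pulses, each controlled by the Lipschitz property of $\psi$ and bounded by a term of order $B_n^{1/\eta} h$, yielding a total of order $K_{x_0} h$, which is dominated by $h^{\alpha/\delta}$ since $\alpha/\delta < 1$. In regime (ii) $j_0 \leq j \leq J_\delta$, I would show that every term vanishes: if $\psi_n(x) \neq 0$ then $|x - X_n| \leq B_n^{-1/\eta}$, whence $B_n^{-\delta} < |X_n - x_0| \leq h + B_n^{-1/\eta}$, which for $j$ safely below $J_\delta$ (using $\delta < 1/\eta$ to absorb $B_n^{-1/\eta}$ into a fraction of $B_n^{-\delta}$) contradicts $B_n^{-\delta} \geq c \cdot h$.

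Regime (iii) $j > J_\delta$ is the dominant one. Using the crude bound $|\psi_n(x)| \leq \|\psi\|_\infty$, the degenerate estimate \eqref{eq3.11.bis} giving $\sum_{n \in A_j} T_n(x,0) \leq K j^2$ uniformly in $x$, and the size bound $C_n^{-\alpha} \leq 2^{-\alpha\eta(1-\varepsilon_j)j}$ from \eqref{eq3.9}, the contribution of scale $j$ is at most a constant multiple of $j^{2+\alpha} 2^{-\alpha\eta j}$; the extra factor $j^{\alpha}$ comes from $2^{\alpha\eta\varepsilon_j j} = j^\alpha$. Summing this geometric-type series from $j = J_\delta$ onwards gives a bound of order $J_\delta^{2+\alpha} 2^{-\alpha\eta J_\delta} = |\log_2 h|^{2+\alpha} h^{\alpha/\delta}$, matching the claim. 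The pointwise H\"older bound $h_F(x_0) \geq \alpha/\delta$ then follows by absorbing the logarithmic factor into an arbitrary power $h^{-\varepsilon}$ and taking $P_{x_0}(x - x_0) \equiv F(x_0)$.

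I expect the main bookkeeping obstacle to be the handling of the boundary between regimes (ii) and (iii): one must enlarge the range of regime (iii) by an additive $O(1)$ in $j$ so that regime (ii) truly contributes zero, at the cost of only a multiplicative constant in the tail sum. Apart from this indexing issue, every other step is routine given the preparatory Lemmas \ref{lem3.3}, \ref{lem3} and \ref{lem3.6} already established in Section \ref{sec:3}.
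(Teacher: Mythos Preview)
Your proposal is correct and follows essentially the same route as the paper: the paper's cutoff $j_1$ is your $J_\delta$, its finite exceptional set (indexed by $\widetilde j_0$) is your regime~(i), its observation that $S_2=0$ and that only the $N_{x_0}$ exceptional terms survive in $S_1$ amounts to your regime~(ii) vanishing, and its tail estimate for $S_3$ is exactly your regime~(iii) computation using \eqref{eq3.11.bis} and \eqref{eq3.9}. The only organizational difference is that the paper keeps regimes~(i) and~(ii) bundled as a single sum $S_1$ and argues directly that at most $N_{x_0}$ terms survive, whereas you separate them explicitly; the boundary shift you flag between~(ii) and~(iii) is precisely what the paper's definition of $j_1$ (largest $j$ with $|x-x_0|+2^{-j}\le 2^{-\delta\eta j}$) is designed to absorb.
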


\begin{proof} Let $x_0\notin G_\delta$. For $x$ with $|x-x_0|\leq 1$,  there exists a unique $j_0\in\mathbb{N}$ such that 
\begin{equation}
    2^{- \eta (j_0+1)} \leq |x-x_0| <2^{- \eta j_0} \nonumber
\end{equation}
and call $j_1 $ the largest positive  integer so that $ |x-x_0|+2^{-j_1} \leq 2^{-\delta\eta j_1}$. The integer $j_1$ exists since $2^{-j\eta \delta j_1}$ tends to 0 when $j_1 \to +\infty$.

Observe that when $j_0$ becomes large,   $|j_1 - j_0 /\delta|  \to 0$. So it is assumed that $j_0$ is so large that $ j_0/ \delta \leq j_1  \leq   j_0/ \delta +2$, so that $2^{-j_0\eta}\sim  2^{-j_1\delta \eta} \sim |x-x_0|$. Observe also that this explains the fact that $\delta$ must be less or equal than $1/\eta$.

By definition of $G_\delta$,   since $x_0\notin G_\delta$, there exists at most a finite number, say  $N_{x_0} $, of  balls  $ \{B (X_{n_k},B_{n_k}^{-\delta} )  \}_{1\leq k \leq N_{x_0} }$ 
that contain $x_0$. Write $\widetilde j_0$ for the smallest integer $j$ such that $\bigcup_{k=1}^{N_{x_0} } \{n_k\}  \in \bigcup_{j=1}^{\widetilde j_0} A_j$.  So it may be assumed that $x$ is so close to $x_0$ that  for every $j\geq j_1/2\delta $, $j\ep_j \geq   \widetilde j_0 +1$ and for every $n \in A_j $ with $j\geq   j_1$, $|x_0-X_n| > B_{n}^{-\delta}$.

Recalling that  the support of $\psi_n$ is the ball $B(X_n, B_n^{-1/\eta})$ and that $\delta \leq 1/\eta$, this implies that     $x_0$   belongs to the  support  of  at most $N$ pulses $\psi_n$ with $n\in A_j$ and $j < j_1$, and does not belong to any support of $\psi_n$, for $n\in  A_j$ and $j\geq j_1$.

Also, when $j\leq j_1$ and $n\in A_j$, by definition of $j_1$, one has $|x-x_0|+B_n^{-1/\eta} \leq B_n^{-\delta}$. Hence  $x \in B(X_n, B_n^{-1/\eta})$  would imply that    $x_0 \in   B(X_n, B_n^{-\delta})$, which is possible for only $N$ balls.  Consequently,   $x$ and $x_0$ both belong to at most $N$ supports of pulses $\psi_n$  with $n\in A_j$ and $j\leq j_1$. 

Let us write  $ 
|F(x)-F(x_0)|\leq S_1+S_2+S_3   
$
with
$ F_j(x) = \sum_{n\in A_j} C_n^{-\alpha} \psi_n(x)$ and 
\begin{eqnarray}
S_1  = \left | \sum_{j=0}^{j_1-1} F_j(x)-F_j(x_0) \right |,  \ \ \ \ 
S_2  =  \sum_{j=j_1}^{+\infty} |F_j(x_0)| \  \  \mbox{ and } \ \ 
S_3 =  \sum_{j=j_1}^{+\infty} |F_j(x)| . \nonumber 
\end{eqnarray}
 We first  give an upper-bound for   $ S_1 $.  By the remarks above, $S_1$ contains at most $N_{x_0} $ non-zero terms of the form $C_{n_i} ^{-\alpha} (\psi_{n_i}(x)-\psi_{n_i}(x_0))$ (for integers $n_1$, ..., $n_{N_{x_0} }$),   and for each of them, since     $\psi $ is Lipschitz with some constant $K>0$, one has 
$$
  C_{n_i} ^{-\alpha} \left|\psi\left( B_{n_i} ^{\frac{1}{\eta}} \left(x-X_{n_i} \right) \right)-\psi\left( B_{n_i} ^{\frac{1}{\eta}} \left(x_0-X_{n_i} \right) \right) \right|  
    \leq  C_{n_i} ^{-\alpha} B_{n_i} ^{\frac{1}{\eta}}K |x-x_0|    . $$
    By (\ref{eq3.4}), (\ref{eq3.9}) and the definition of $\widetilde j_0$,  if ${n_i}  \in A_j$, then one has for some other constant $K>0$ that    
    $$
   C_{n_i} ^{-\alpha} B_{n_i} ^{\frac{1}{\eta}}    \leq  K   2^{-\alpha\eta j(1-\varepsilon_{j})}2^j  \leq K  \widetilde j_0 ^\alpha 2^{\widetilde j_0(1- \alpha\eta)} \leq K   j_1  ^\alpha 2^{ \ep_{j_1}  j_1  }   =  K j_1^{\alpha+1/\eta}.$$
   
Using that  $j_1\sim \delta j_0\sim \frac{\delta}{\eta} |\log_2|x-x_0|| $, this finally gives   for some constant $K_{x_0}$  depending on $x_0$ \begin{eqnarray}
\nonumber
S_1 & \leq & K N_{x_0}   |x-x_0|   j_1 ^{\alpha+1/\eta}   \leq K_{x_0}   |x-x_0| \cdot  | \log_2 |x-x_0|\ |^{ \alpha+1/\eta}\\
\label{eq_S1}
& \leq&    |x-x_0| ^\alpha  | \log_2 |x-x_0|\ |^{ 3+ \alpha } .
\end{eqnarray} 
 Observe that the last inequality holds when $j_1$ tends to $+\infty$, and is quite crude.
 
 \medskip

By construction, $\psi_n(x_0) = 0$  for    every  $n\in A_j $ with $j\geq j_1$, so $S_2=0$.

\medskip

Finally, for   $S_3$, one   writes that $ |\psi_n(x)    | \leq ||\psi||_{\infty}$, and then \begin{eqnarray}
S_3 & = & \sum_{j=j_1}^{+\infty} |F_j(x)| \leq      K ||\psi||_{\infty} \sum_{j=j_1}^{+\infty}    \sum_{n\in A_j} C_{n} ^{-\alpha} {\bf 1 \!\!\!1}_{\psi_n(x) \neq 0} \\
&  \leq &      K ||\psi||_{\infty} \sum_{j=j_1}^{+\infty} j^{\alpha}  2^{-\alpha \eta j} \sum_{n\in A_j}  T_n(x,0)\nonumber\\
\label{eq6.4}
 & \leq &  K ||\psi||_{\infty} \Big( \sum_{j=j_1}^{+\infty} j^{\alpha}  2^{-\alpha \eta j} j^2   \Big)  \leq K j_1^{2+\alpha} 2^{-\alpha \eta j_1 }
\leq  K  j_0^{2+\alpha} 2^{- j_0\frac{\alpha\eta}{\delta}}  \nonumber \\
 &\leq &  K | \log_2 |x-x_0|\ |^{2+\alpha} \ |x-y|^{\frac{\alpha}{\delta}} .
\end{eqnarray} 
The result follows from    (\ref{eq_S1}) and (\ref{eq6.4}), and by letting     $\ep$ tend to zero.
\end{proof}

%%%%%%%%%%%%%%%%%%%%%%%
%%%%%%%%%%%%%%%%%%%%%%%
%%%%%%%%%%%%%%%%%%%%%%%
%%%%%%%%%%%%%%%%%%%%%%%
%%%%%%%%%%%%%%%%%%%%%%%
%%%%%%%%%%%%%%%%%%%%%%%
%%%%%%%%%%%%%%%%%%%%%%%
%%%%%%%%%%%%%%%%%%%%%%%
%%%%%%%%%%%%%%%%%%%%%%%
%%%%%%%%%%%%%%%%%%%%%%%
%%%%%%%%%%%%%%%%%%%%%%%
%%%%%%%%%%%%%%%%%%%%%%%
%%%%%%%%%%%%%%%%%%%%%%%
%%%%%%%%%%%%%%%%%%%%%%%
\section{Upper-bound for the H\"older exponent of $F$ via the sets $G'_\delta$} \label{sec:7}

We  now     find an upper bound for  the pointwise  H\"older exponent of $F$ at every $x_0\in G'_\delta $, using a wavelet method. Let us recall  the theorem of Jaffard \cite{ref24} relating continuous wavelet transforms and pointwise regularity.

\begin{theorem}\label{thm7.1}
Let $f\in L^\infty_{loc}(\mathbb{R})$, $x_0\in \R$ and $H>0$. If $f\in C^H (x_0)$, then there exists $K>0$ and a neighborhood $U$ of $(0^+,x_0)$ such that
$$
\forall  (s,t)\in U \ \ ,\ \ |W_f (s,t)|\leq K s^{ \frac{1}{2}} (s + |{x_0-t}| )^H.
$$
\end{theorem}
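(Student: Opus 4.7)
The plan is to exploit the definition of $C^H(x_0)$ together with the cancellation property of the wavelet $\phi$ (its vanishing integral, plus higher vanishing moments when $H \geq 1$). Since $f \in C^H(x_0)$, there exist a polynomial $P_{x_0}$ of degree less than $H$ and constants $r, K_H > 0$ such that $|f(x) - P_{x_0}(x-x_0)| \leq K_H |x - x_0|^H$ on $B(x_0, r)$. I will fix some $R > 0$ with $\mathrm{supp}(\phi) \subset [-R, R]$, so that $\mathrm{supp}(\phi_{s,t}) \subset B(t, Rs)$, and then choose the neighborhood $U$ of $(0^+, x_0)$ small enough so that $(s,t)\in U$ forces $Rs + |t - x_0| \leq r$. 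On such a $U$ all the relevant integrands are controlled by the pointwise Hölder estimate.

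The key algebraic step is the cancellation identity: since $\phi$ kills polynomials of degree less than $H$,
$$W_f(s,t) = \frac{1}{\sqrt{s}} \int_{\R} f(x)\,\phi_{s,t}(x)\,dx = \frac{1}{\sqrt{s}} \int_{\R} \bigl( f(x) - P_{x_0}(x-x_0) \bigr)\,\phi_{s,t}(x)\, dx .$$
The remaining analytic step is to bound the integrand. For $x$ in the support of $\phi_{s,t}$, one has $|x-x_0| \leq |x-t|+|t-x_0| \leq Rs + |t-x_0|$, hence by the Hölder estimate and the change of variables $u = (x-t)/s$,
$$|W_f(s,t)| \leq \frac{K_H (Rs + |t-x_0|)^H}{\sqrt{s}} \int_{\R} |\phi_{s,t}(x)|\,dx = K_H \|\phi\|_{L^1}\, s^{1/2} (Rs + |t-x_0|)^H .$$
Absorbing $R$ into a final constant $K$ yields $|W_f(s,t)| \leq K s^{1/2}(s + |t-x_0|)^H$, which is the claimed bound.

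The only subtle point is ensuring that $\phi$ actually annihilates $P_{x_0}$. In the regime relevant for this paper ($H<1$), $P_{x_0}$ is a constant and the single vanishing moment $\int \phi = 0$ already assumed in the definition of the continuous wavelet transform is enough. For general $H > 0$ one must require $\phi$ to have at least $\lfloor H \rfloor + 1$ vanishing moments, which is exactly the content of the hypothesis ``$\phi$ is sufficiently regular'' inherited from Theorem \ref{thm7.11}. Everything else is a direct estimate, so there is no genuine obstacle; the main care is simply in choosing $U$ small enough that the Hölder inequality may be applied on the whole support of $\phi_{s,t}$.
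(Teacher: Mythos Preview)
Your argument is correct and is in fact the standard proof of this classical result. Note, however, that the paper does not prove Theorem~\ref{thm7.1} at all: it is merely \emph{recalled} from Jaffard~\cite{ref24} (and Jaffard--Meyer~\cite{ref26}) as a known tool, so there is no ``paper's own proof'' to compare against. Your write-up reproduces precisely the argument one finds in those references: subtract the Taylor-type polynomial using the vanishing moments of $\phi$, then bound $|x-x_0|$ on $\mathrm{supp}\,\phi_{s,t}$ by $Rs+|t-x_0|$ and integrate. Your observation about the number of vanishing moments is also on point: the paper only assumes $\int\phi=0$, which suffices because throughout the paper $H\le \alpha<1$ and $P_{x_0}$ is a constant.
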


This theorem is key to prove next proposition.

\begin{proposition}\label{prop7.1}
Almost surely, for all $\delta\in \left[1,\frac{1}{\eta}\right]$ and  $x_0\in G'_\delta$,   $h_F (x_0) \leq \frac{\alpha}{\delta}$.
\end{proposition}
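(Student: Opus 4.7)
\medskip

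\textbf{Proof plan for Proposition \ref{prop7.1}.} The plan is to apply the wavelet converse (Theorem \ref{thm7.1}) at a well-chosen scale, matched to an isolated pulse approaching $x_0$. Fix $x_0 \in G'_\delta$: by \eqref{defg'delta} there exist infinitely many integers $j$ together with indices $n = n(j) \in \mathcal{I}_j$ such that $|x_0 - X_n| \leq B_n^{-\delta(1-\widetilde\varepsilon_j)}$. I will choose the analyzing wavelet $\phi$ to be $C^\infty$, compactly supported in $[-1,1]$, with $\int \phi = 0$ and with $c := \int_{\mathbb{R}} \psi(u) \phi(u)\, du \neq 0$ (this last condition is easy to ensure by picking $\phi$ adapted to $\psi$, since $\psi$ is non-trivial and Lipschitz). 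Set $s_n := B_n^{-1/\eta}$ and consider $W_F(s_n, X_n)$.

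The main term is the diagonal contribution $C_n^{-\alpha} d_n(s_n, X_n)$. A direct change of variables $u = B_n^{1/\eta}(x - X_n)$ in \eqref{eq7.5} yields $d_n(s_n, X_n) = c\, s_n^{1/2}$, so this diagonal term has modulus $|c|\, C_n^{-\alpha} s_n^{1/2}$. The crux is to show that the remaining contributions are negligible compared to this diagonal term. Split the sum $W_F(s_n, X_n) = \sum_{j' \geq 0}\sum_{m\in A_{j'}} C_m^{-\alpha} d_m(s_n, X_n)$ into three pieces according to whether $j' \in [\lfloor (1-p\eta\varepsilon_j) j \rfloor, \lfloor \gamma j \rfloor]\setminus\{n\}$ (the ``middle'' block $\widetilde A_j$), $j' < \lfloor (1-p\eta\varepsilon_j) j \rfloor$ (coarse scales), or $j' > \lfloor \gamma j \rfloor$ (fine scales), with $\gamma$ chosen close to $1/\eta$.

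The middle block vanishes identically: since $\phi_{s_n, X_n}$ is supported in $B(X_n, s_n) \subset B(X_n, B_n^{-1/\eta})$ and $n \in \mathcal{I}_j$, for every $m \in \widetilde A_j\setminus\{n\}$ the supports of $\psi_m$ and of $\phi_{s_n, X_n}$ are disjoint, giving $d_m(s_n, X_n) = 0$. For the coarse scales, Lemma \ref{lem7.1} gives $|d_m(s_n, X_n)| \lesssim s_n^{1/2}\, s_n\, B_m^{1/\eta}$ and Lemma \ref{lem3.6} bounds the number of overlapping supports by $K(j')^2$; summing with $C_m^{-\alpha} \sim 2^{-\alpha\eta j'}$ produces an upper bound of order $s_n^{1/2}\, j^{\,2-(1-\alpha\eta)p}\, 2^{-\alpha\eta j}$, which by the choice \eqref{choice-p} of $p$ decays as $j^{-(1+3\alpha)}$ times the diagonal term. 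For the fine scales, one uses the other branch of Lemma \ref{lem7.1}, namely $|d_m(s_n, X_n)| \lesssim s_n^{-1/2} B_m^{-1/\eta}$, again combined with the overlap bound from Lemma \ref{lem3.6} (distinguishing $\gamma j < j' \leq j/\eta$ and $j' > j/\eta$); a geometric-series estimate then shows this contribution is $O\!\bigl(2^{-(\gamma-1)(1+\alpha\eta) j}\bigr)$ relative to the diagonal term, again negligible. Consequently, for $j$ large enough, $|W_F(s_n, X_n)| \geq \tfrac{|c|}{2}\, C_n^{-\alpha} s_n^{1/2}$.

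To conclude, assume for contradiction that $h_F(x_0) > H$ for some $H > \alpha/\delta$. Theorem \ref{thm7.1} provides a constant $K$ with $|W_F(s_n, X_n)| \leq K\, s_n^{1/2} (s_n + |x_0 - X_n|)^H$ once $j$ is large. Using $\delta \leq 1/\eta$, the distance $|x_0 - X_n| \leq B_n^{-\delta(1-\widetilde\varepsilon_j)}$ dominates $s_n = B_n^{-1/\eta}$, so one obtains $C_n^{-\alpha} \lesssim B_n^{-\delta(1-\widetilde\varepsilon_j) H}$. Substituting the bounds $C_n, B_n \in [2^{\eta j(1-\varepsilon_j)}, 2^{\eta j(1+\varepsilon_j)}]$ from Lemma \ref{lem3} and letting $j\to+\infty$ along the subsequence, one gets $\alpha \geq \delta H$, contradicting $H > \alpha/\delta$. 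Hence $h_F(x_0) \leq \alpha/\delta$. The main technical obstacle is the non-trivial bookkeeping in the coarse-scale estimate: it is precisely to absorb the polylogarithmic loss coming from the overlap bound $K(j')^2$ of Lemma \ref{lem3.6} that the threshold parameter $p$ had to be chosen large in \eqref{choice-p}, and the whole construction of $\widetilde A_j$ and of the isolated set $\mathcal{I}_j$ in Section \ref{sec:5} is tailored for this cancellation to be usable here.
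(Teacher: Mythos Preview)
Your proposal is correct and follows essentially the same route as the paper's own proof: the same choice of analyzing wavelet $\phi$ with $\int \psi\phi \neq 0$, evaluation of $W_F$ at $(s,t)=(B_{n_k}^{-1/\eta},X_{n_k})$, the same three-block decomposition (coarse scales / the isolated block $\widetilde A_j$ / fine scales), the vanishing of the middle block by the definition of $\mathcal{I}_j$, the use of Lemma~\ref{lem7.1} together with the overlap bound of Lemma~\ref{lem3.6} for the two remaining blocks, and the final contradiction via Theorem~\ref{thm7.1}. Two minor remarks: in the coarse-scale bound the exponent of $j$ should be $2+\alpha-(1-\alpha\eta)p$ rather than $2-(1-\alpha\eta)p$ (this is precisely why \eqref{choice-p} asks for $p_0>(3+3\alpha)/(1-\alpha\eta)$), and $\gamma$ is not a free parameter here but the one fixed in Section~\ref{sec:5}, so your phrase ``with $\gamma$ chosen close to $1/\eta$'' should be replaced by the standing assumption $\gamma\in(1,1/\eta]$, which is what makes the fine-scale ratio $j^{2+2\alpha}2^{-(\gamma-1)(1+\alpha\eta)j}$ tend to zero.
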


\begin{proof}
First, without loss of generality,   assume in addition that the function $\phi$ used to compute the wavelet transform belongs to  $C^1(\R)$, is  exactly supported by the interval $[-1,1]$, and that 
\begin{equation}
\label{property_phi}
\int_{-1}^1 \phi(u) \psi(u) du \neq 0.
\end{equation}
The existence of such a $\phi$ is a trivial exercise.

\smallskip

Fix $x_0\in G'_\delta$. There exist two increasing  sequences of integers $(n_k)_{k\in\mathbb{N}}$ and   $(j_k)_{k\in\mathbb{N}}$  such that  $n_k\in  \ii_{j_k}$  and $x_0\in  B(X_{n_k},B_{n_k}^{-\delta(1-\widetilde{\varepsilon}_{j_k})}).$
 
Let $k\in\mathbb{N}^*$ with $n_k\in \ii_{j_k}$.  The values of continuous wavelet transforms $W_F (B_{n_k}^{-\frac{1}{\eta}}, X_{n_k})$, are now estimated. Setting  $J_k=\lfloor(1-p_0\eta\varepsilon_{j_k})j_k \rfloor$ and $\widetilde{J}_k=\lfloor(1+\gamma)j_k \rfloor$, one writes $W_F(B_{n_k}^{-\frac{1}{\eta}},X_{n_k}) = S_1 + S_2 + S_3$
with
\begin{eqnarray}
S_1 &=& \sum_{j=0}^{J_k-1}\sum_{n\in A_j}C_n^{-\alpha} d_n(B_{n_k}^{-\frac{1}{\eta}},X_{n_k}), \ \ \ \
S_2 = \sum_{j=J_k}^{\widetilde{J}_k}\sum_{n\in A_j}C_n^{-\alpha} d_n(B_{n_k}^{-\frac{1}{\eta}},X_{n_k}) \nonumber \\
\mbox{ and } \ \
S_3 &=& \sum_{j=\widetilde{J}_k+1}^{+\infty}\sum_{n\in A_j}C_n^{-\alpha} d_n(B_{n_k}^{-\frac{1}{\eta}},X_{n_k}). \nonumber
\end{eqnarray}
 
Let us first find a lower bound for   $S_2$. Recalling  the definition  (\ref{eq5.2}) of $\ii_{j_k}$,  $ {n}_k $ is the unique integer in $\widetilde  A_{j_k}$ such that $x_0\in B(X_{\widetilde{n}_k},B_{\widetilde{n}_k}^{-\frac{1}{\eta}})$. Hence, recalling \eqref{eq7.5}, $d_n (B_{n_k}^{-\frac{1}{\eta}}, X_{n_k}) =0$ when $n\neq n_k$ (since the support of $\psi_n$ and $\phi_{n_k} $ do not intersect)  and 
\begin{equation}
    S_2 = C_{n_k}^{-\alpha} d_{n_k} (B_{n_k}^{-\frac{1}{\eta}}, X_{n_k}). \nonumber
\end{equation}
An integration by part and a change of variables give
$$
d_{n_k} (B_{n_k}^{-\frac{1}{\eta}},X_{n_k})  = B_{n_k}^{-1/(2\eta)} \int_{-1}^{1} \psi(u)\phi(u)du.$$ 
Condition \eqref{property_phi} implies  that for some fixed constant $K_2>0$ (depending on $\psi$ and $\phi$ only), for every integer $k$,  
\begin{equation}
\label{minS2}
    |S_2|\geq K_2  C_{ {n}_k}^{-\alpha} B_{n_k}^{-\frac{1}{2\eta}} \geq  K _2B_{n_k}^{-\frac{1}{2\eta}}  2^{-\alpha\eta (1+\varepsilon_{j_k})j_k} \geq  K_2  B_{n_k}^{-\frac{1}{2\eta}-\alpha (1+\varepsilon_{j_k})}.
\end{equation}
where   (\ref{eq5.2}) and (\ref{eq3.9}) have been used.

Next, let us estimate $S_1$. By  (\ref{eq7.6}),  (\ref{eq3.9}) and (\ref{eq3.4}), one has
\begin{eqnarray}
|S_1| & \leq &  \sum_{j=0}^{J_k-1} \sum_{n\in A_j} C_n^{-\alpha} |d_n(B_{n_k}^{-\frac{1}{\eta}},X_{n_k})| \nonumber\\
& \leq & \sum_{j=0}^{J_k-1} \sum_{n\in A_j} C_n^{-\alpha} B_{n_k}^{-\frac{1}{2\eta}} \min\{ B_{n_k}^{-\frac{1}{\eta}} B_n^{\frac{1}{\eta}},B_{n_k}^{\frac{1}{\eta}} B_n^{-\frac{1}{\eta}} \} T_n(X_{n_k},B_{n_k}^{-\frac{1}{\eta}}) \nonumber\\
& \leq & \sum_{j=0}^{J_k-1}  2^{-\alpha\eta j(1-\varepsilon_j)} B_{n_k}^{-\frac{1}{2\eta}} \min\{ B_{n_k}^{-\frac{1}{\eta}} 2^j,B_{n_k}^{\frac{1}{\eta}} 2^{-j-1} \} \sum_{n\in A_j} T_n( X_{n_k}, 2^{-j_{k}}) . \nonumber 
\end{eqnarray}
When $j<(1-\eta\varepsilon_{j_k})j_k$, $ B_{n_k}^{-\frac{1}{\eta}}  \leq 2^{-j-1}$, so the minimum above is less than  $2 B_{n_k}^{-\frac{1}{\eta}} 2^j$. In addition, by \eqref{eq3.5} one has 
$
 \sum_{n\in A_j} T_n( X_{n_k}, 2^{-j_{k}})  \leq  K  j^2  
$ (this holds as long as $j \leq j_k/\eta$).
Hence by  (\ref{eq3.11}), for some constant $K_1 >0$ (that may change from one inequality to the next one), 
\begin{eqnarray*}
|S_1|& \leq & K_1 \sum_{j=0}^{J_k-1} j^{2+\alpha} 2^{-\alpha\eta j} B_{n_k}^{-\frac{1}{2\eta}} B_{n_k}^{-\frac{1}{\eta}} 2^j \leq K_1  B_{n_k}^{ -\frac{3}{2\eta}} \sum_{j=0}^{J_k-1} j^{2+\alpha} 2^{(1-\alpha\eta) j} \\
&\leq & K_1 B_{n_k}^{-\frac{3}{2\eta}} j_k^{2+\alpha} 2^{(1-\alpha\eta)(1-p_0\eta\varepsilon_{j_k})j_k}.
\end{eqnarray*}
Since $j_k=2^{\eta\varepsilon_{j_k} j_k}$ and $n_k\in \ii_{j_k}$, $2^{j_k}\leq  B_{n_k}^{\frac{1}{\eta}}$,  so 
\begin{eqnarray}
    |S_1| &\leq & K_1  B_{n_k}^{-\frac{3}{2\eta}} B_{n_k}^{(3+\alpha)\varepsilon_{j_k}} B_{n_k}^{(\frac{1}{\eta}-\alpha)(1-p_0\eta\varepsilon_{j_k})} \leq  K_1 B_{n_k}^{-\frac{1}{2\eta}-\alpha-(p_0-3-\alpha-\alpha\eta p_0)\varepsilon_{j_k}}. \nonumber
\end{eqnarray}
Our choice \eqref{choice-p} for $p_0$ ensures that  $p_0-3-\alpha-\alpha\eta p_0 > 2 \alpha$, hence 
\begin{eqnarray}
\label{majS1}
    |S_1| &\leq & K_1    B_{n_k}^{-\frac{1}{2\eta}-\alpha(1+2\ep_{j_k})}  .
\end{eqnarray}

Finally, for $S_3$, one writes  by  (\ref{eq7.6}), (\ref{eq3.9}) and (\ref{eq3.4}), and the same lines of computations as above,  that for some $K_3>0$, 
\begin{eqnarray}
|S_3| & \leq &  \sum_{j=\widetilde{J}_k+1}^{+\infty} \sum_{n\in A_j} C_n^{-\alpha} |d_n(B_{n_k}^{-\frac{1}{\eta}},X_{n_k})| \nonumber\\ 
& \leq &K_3  \sum_{j=\widetilde{J}_k+1}^{+\infty}  2^{-\alpha\eta j(1-\varepsilon_j)} B_{n_k}^{-\frac{1}{2\eta}} \min\{ B_{n_k}^{-\frac{1}{\eta}} 2^j,B_{n_k}^{\frac{1}{\eta}} 2^{-(j+1)} \} \sum_{n\in A_j} T_n(X_{n_k}, 2^{-j_k} ). \nonumber 
\end{eqnarray}
When $j \geq  \widetilde J_k = \lfloor  (1+\gamma)j_k \rfloor $, the above minimum is now reached at  $ B_{n_k}^{\frac{1}{\eta}}    2^{-j-1}$.

Then,  still by and (\ref{eq3.5}), the sum  $\sum_{n\in A_j} T_n(X_{n_k},  2^{-j_k} )$ is bounded above by $Kj^2$ when $j\leq j_k/\eta$, and by $Kj^2 2^{\eta(j-j_k/\eta)}$ when $j>j_k/\eta$. Hence by  (\ref{eq3.11}),   for  some constant $K_3$ that may change from line to line but does not depend on $k$ or any of the moving parameters,
\begin{eqnarray}
|S_3|& \leq &K_3 \sum_{j=(1+\gamma)j_k}^{\lfloor j_k/\eta \rfloor } j^{2+\alpha} 2^{-\alpha\eta j} B_{n_k}^{-\frac{1}{2\eta}} B_{n_k}^{\frac{1}{\eta}} 2^{-j}   \\ && \ \ \ + K_3 \sum_{j={\lfloor j_k/\eta \rfloor }+1}^{+\infty} j^{2+\alpha} 2^{-\alpha\eta j} B_{n_k}^{-\frac{1}{2\eta}} B_{n_k}^{\frac{1}{\eta}} 2^{-j}  2^{\eta(j-j_k/\eta )}\nonumber\\
\nonumber  &\leq &K_3   B_{n_k}^{ \frac{1}{2\eta} }\left(   \sum_{j=(1+\gamma)j_k}^{\lfloor j_k/\eta \rfloor } j^{2+\alpha} 2^{- (1+\alpha\eta) j}  +   2^{ - j_k}   \sum_{j=\lfloor j_k/\eta \rfloor +1}^{+\infty}j^{2+\alpha} 2^{(\eta-1-\alpha\eta )j} \right) .
\end{eqnarray}
The first sum above is bounded above by 
$$\sum_{j= \lfloor (1+\gamma)j_k\rfloor }^{\lfloor j_k/\eta\rfloor} j^{2+\alpha} 2^{-(1+\alpha\eta)j}  \leq K_3  {j}_k^{2+\alpha} 2^{-(1+\alpha\eta)(1+\gamma)j_k }$$
and the second one by
\begin{align*}
2^{-j_k} \sum_{j=\lfloor j_k/\eta \rfloor+1}^{+\infty} j^{2+\alpha} 2^{(\eta-1-\alpha\eta)j}  & \leq K_3 2^{-j_k}j_k^{2+\alpha} 2^{(\eta-1-\alpha\eta)j_k/\eta} = K_3 j_k^{2+\alpha}2^{-\frac{j_k}{\eta}(1+\alpha\eta)}.
\end{align*}
Since $B_{n_k}^{\frac{1}{\eta}} \sim 2^{j_k}$ and $j_k=2^{j_k \eta \ep_{j_k}} \sim  B_{n_k}^{\ep_{j_k}}$ and  $1+\gamma < 1/\eta $, we get that
\begin{eqnarray*}
|S_3| & \leq & K_3  {j}_k^{2+\alpha} 2^{-(1+\alpha\eta)(1+\gamma)j_k }  +  K_3 j_k^{2+\alpha}2^{-\frac{j_k}{\eta}(1+\alpha\eta)}\\
&\leq  &   K_3 B_{n_k} ^{-  \frac{(1+\alpha\eta)(1+\gamma)}{\eta} +(2+\alpha)\ep_{j_k}}.
%\\
%& \leq & K_3 B_{n_k}^{\frac{1}{2\eta}-\frac{1}{\eta}(1+\alpha\eta)(1+p\eta\varepsilon_{j_k})+(4+\alpha)\varepsilon_{j_k}}. \nonumber
\end{eqnarray*}
Observe that $\frac{(1+\alpha\eta)(1+\gamma)}{\eta} -(2+\alpha)\ep_{j_k} >  \frac{1}{2\eta}+\alpha (1+2\varepsilon_{j_k})$. So, 
\begin{equation}
\label{majS3}
|S_3| \leq K_3 B_{n_k}^{-\frac{1}{2\eta}-\alpha (1+2\varepsilon_{j_k})},
\end{equation}
this last inequality being very generous ($S_3$ is much smaller than the term on the right hand-side).

Combining  \eqref{minS2}, \eqref{majS1} and \eqref{majS3}, and the fact that $B_{n_k}^{ -\varepsilon_{j_k}} \to 0$ when $k$ tends to infinity, one concludes that  for every sufficiently large integers $k$, 
\begin{equation}
\label{eqfinal1}
|W_F(B_{n_k}^{-\frac{1}{\eta}},X_{n_k}) |\geq K B_{n_k}^{-\frac{1}{2\eta}-\alpha (1+\varepsilon_{j_k})}.
\end{equation}

 Assuming that $f\in \mathcal{C}^{\frac{\alpha}{\delta} +\ep}(x_0)$, we would have  by Theorem \ref{thm7.1} that for some $K'>0$, 
\begin{eqnarray*}
  |W_F(B_{n_k}^{-\frac{1}{\eta}},X_{n_k})   | &  \leq & K' B_{n_k}^{-\frac{1}{2\eta}}   \Big( B_{n_k}^{-\frac{1}{\eta}}  + |x_0-X_{n_k}| \Big)^{\frac{\alpha}{\delta} +\ep}   \\
&\leq &  K' B_{n_k}^{-\frac{1}{2\eta}}   \Big( B_{n_k}^{-\frac{1}{\eta}}  +B_{n_k}^{- \delta(1-\widetilde{\varepsilon}_{j_k}) }\Big)^{\frac{\alpha}{\delta} +\ep}  \\
&\leq & K' B_{n_k}^{-\frac{1}{2\eta}}     B_{n_k}^{- \delta(1-\widetilde{\varepsilon}_{j_k})(\frac{\alpha}{\delta} +\ep)}  \end{eqnarray*}
since $|x_0 -X_{n_k}| \leq B_{n_k}^{-\delta(1-\widetilde{\varepsilon}_{j_k})}$. This contradicts \eqref{eqfinal1} since  the sequences  $(\varepsilon_j)$ and $(\widetilde \varepsilon_j)$  converge to 0 as $j\to +\infty$. Consequently, $f\notin \mathcal{C}^{\frac{\alpha}{\delta} +\ep}(x_0)$ for every $\ep>0$,  hence the result.
\end{proof}

To conclude this part, we would like to emphasize that this analysis is quite sharp since the bounds obtained for $S_1$, $S_2$ and $S_3$ are very tight (and the choice for $p_0$ is key). Only the fine study of isolated points made it possible to obtain this result.

Also, observe that the proof does not work any more when  $\delta >1/\eta$, since in the last series of inequalities $|W_F (B_{n_k}^{-\frac{1}{\eta}},X_{n_k})| $, the term   $B_{n_k}^{-\frac{1}{\eta}} + B_{n_k}^{-\delta (1-\widetilde{\varepsilon}_{j_k})}$ can not be bounded by above by  $B_{n_k}^{-\delta (1-\widetilde{\varepsilon}_{j_k})}$.

%%%%%%%%%%%%%%%%%%%%%%%
%%%%%%%%%%%%%%%%%%%%%%%
%%%%%%%%%%%%%%%%%%%%%%%
%%%%%%%%%%%%%%%%%%%%%%%
%%%%%%%%%%%%%%%%%%%%%%%
%%%%%%%%%%%%%%%%%%%%%%%
%%%%%%%%%%%%%%%%%%%%%%%
%%%%%%%%%%%%%%%%%%%%%%%
%%%%%%%%%%%%%%%%%%%%%%%
%%%%%%%%%%%%%%%%%%%%%%%
%%%%%%%%%%%%%%%%%%%%%%%
%%%%%%%%%%%%%%%%%%%%%%%
%%%%%%%%%%%%%%%%%%%%%%%
%%%%%%%%%%%%%%%%%%%%%%%
\section{Multifractal spectrum of $F$} \label{sec:8}

Recall that  the study of  the regularity of $F$ is restricted to the interval $[0,1]$. We start by the range of possible exponents for $F$.

\begin{lem}
\label{lem_range}
Almost surely, for every $x\in \zu$, $\alpha\eta \leq h_F(x)\leq \alpha$.
\end{lem}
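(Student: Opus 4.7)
The plan is to derive both bounds as direct corollaries of the three pillars already established: the uniform modulus of continuity (Proposition \ref{prop4.2}), the covering theorem for isolated points (Theorem \ref{prop5.1}), and the upper bound via wavelet analysis on $G'_\delta$ (Proposition \ref{prop7.1}). Neither bound should require new technical work.

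For the lower bound $h_F(x) \geq \alpha\eta$, I would invoke Proposition \ref{prop4.2} directly. It gives the uniform estimate $|F(x)-F(y)| \leq K|x-y|^{\alpha\eta}|\log_2 |x-y||^{2+\alpha}$ valid almost surely for all $x,y \in [0,1]$ close enough. For any fixed $\beta < \alpha\eta$, the logarithmic factor is absorbed into $|x-y|^{\beta - \alpha\eta}$, so $F \in C^\beta(x_0)$ at every $x_0$, yielding $h_F(x_0) \geq \beta$. Letting $\beta \nearrow \alpha\eta$ gives the desired lower bound uniformly in $x_0$.

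For the upper bound $h_F(x) \leq \alpha$, the idea is to apply Proposition \ref{prop7.1} at the extremal value $\delta = 1$. By Theorem \ref{prop5.1}, we know that almost surely $G'_1 = [0,1]$, so every point $x_0 \in [0,1]$ lies in infinitely many balls $B(X_{n_k}, B_{n_k}^{-(1-\widetilde{\varepsilon}_{j_k})})$ with $n_k \in \ii_{j_k}$. Proposition \ref{prop7.1} then yields $h_F(x_0) \leq \alpha/1 = \alpha$ at every such $x_0$, which is the whole interval $[0,1]$.

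Both inequalities combine to give the claimed range. I do not anticipate any obstacle: the entire machinery for isolated points, random coverings, and wavelet characterizations was developed precisely so that the extreme cases $\delta = 1/\eta$ (uniform regularity) and $\delta = 1$ (covering by isolated supports) pin down the range of H\"older exponents. The only minor subtlety is that Proposition \ref{prop6.1} requires $\delta$ strictly less than $1/\eta$, so one cannot directly extract the lower bound from it at the endpoint; this is precisely why appealing to the uniform modulus of continuity from Proposition \ref{prop4.2} is the right route.
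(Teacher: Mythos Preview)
Your proposal is correct and follows exactly the same route as the paper: the lower bound comes from the uniform modulus of continuity in Proposition~\ref{prop4.2}, and the upper bound from combining $G'_1=[0,1]$ (Theorem~\ref{prop5.1}) with Proposition~\ref{prop7.1} at $\delta=1$. Your additional remark on why Proposition~\ref{prop6.1} is unavailable at $\delta=1/\eta$ is a nice clarification but is not needed for the argument.
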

\begin{proof}
First,   Proposition \ref{prop4.2} yields that almost surely, for every $x\in \zu$, $h_F(x)\geq \alpha\eta$.

Then,  Theorem \ref{prop5.1}  gives  $ \zu =G'_1$, and   Proposition \ref{prop7.1} ensures that every $x\in G'_1$ satisfies $h_F(x)\leq \alpha$.  \end{proof}

Gathering the results proved in the previous sections (Propositions \ref{prop6.1} and \ref{prop7.1}, and Remark \ref{reminclusions}), one also sees that almost surely:
\begin{itemize}
\smallskip
\item
 for all $H\in [\alpha\eta,\alpha)$,  
  \begin{equation}
 \label{minEH}
  G'_{\alpha/H} \setminus \bigcup_{\delta >  \frac{\alpha}{H}} G_\delta \subset E_F(H).
    \end{equation}
 
 Indeed, when $x\in  G'_{\frac{\alpha}{H}} $, $h_F(x) \leq \frac{\alpha}{\frac{\alpha}{H}} = H$ and when $\delta >\frac{\alpha}{H} $ and  $x\notin  G_{\delta } $, $h_F(x) \geq  \frac{\alpha}{\delta}  $.
 
\smallskip
 \item
 for all $H\in [\alpha\eta,\alpha]$,  
 \begin{equation}
 \label{majEH}
  E_F(H)  \subset \bigcap _{\delta <  {\frac{\alpha}{H}}} G_\delta .
  \end{equation}

\end{itemize}

In order to obtain the multifractal spectrum of $F$,  a preliminary step consists in estimating   the Hausdorff dimension  and measures of the sets  $G_\delta$ and $G'_\delta$.

For  $h>0$, $\mathcal{H}^{h}$, $\mathcal{H}^{h}_\xi$ stand respectively   for the $h$-Hausdorff measure in $\R$ and the $\alpha$-Hausdorff
pre-measure computed with coverings of sets of diameter less than $\xi>0$.

\begin{proposition}
\label{propfinal2}
With probability one, for every $\delta \in [1,1/\eta]$, one has $\dim_H G_\delta \leq 1/\delta $  and $\mathcal{H} ^{1/\delta} (G'_\delta) =+\infty $.
\end{proposition}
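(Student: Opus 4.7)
The plan is to treat the two inequalities by different methods: a direct covering argument for $\dim_H G_\delta \leq 1/\delta$, and the Mass Transference Principle of Beresnevich--Velani for $\mathcal{H}^{1/\delta}(G'_\delta) = +\infty$.

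For the upper bound, I would use the natural covering provided by the very definition of $G_\delta$ as a $\limsup$. For every $J \in \mathbb{N}$,
$$G_\delta \ \subset \ \bigcup_{j \geq J} \bigcup_{n \in A_j} B(X_n, B_n^{-\delta}).$$
Fix $h > 1/\delta$. By Lemma \ref{lem3.3}, almost surely $N_j \leq 2^{\eta j(1+\varepsilon_j)}$ for $j$ large, and by \eqref{eq3.4} every $n \in A_j$ satisfies $B_n \geq 2^{\eta(j-1)}$, so each ball in the above covering has diameter at most $2^{1-\delta\eta(j-1)}$. This yields, for some constant $C>0$,
$$\mathcal{H}^h_{2^{1-\delta\eta(J-1)}}(G_\delta) \ \leq \ C \sum_{j \geq J} 2^{\eta j(1+\varepsilon_j)} \cdot 2^{-h\delta\eta(j-1)}.$$
Since $h\delta > 1$ and $\varepsilon_j \to 0$, the exponent in $j$ is eventually strictly negative, so this sum tends to $0$ as $J \to +\infty$. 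Hence $\mathcal{H}^h(G_\delta) = 0$ for every $h > 1/\delta$, which gives $\dim_H G_\delta \leq 1/\delta$.

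For the lower bound, the key observation is that $G'_\delta$ is precisely the $\limsup$ set obtained from the family defining $G'_1$ by raising each radius $r_n := B_n^{-(1-\widetilde{\varepsilon}_j)}$ (with $n \in \mathcal{I}_j$) to the power $\delta$: indeed $r_n^\delta = B_n^{-\delta(1-\widetilde{\varepsilon}_j)}$. Since Theorem \ref{prop5.1} guarantees that almost surely $G'_1 = [0,1]$, the $\limsup$ of the original (larger) balls has full Lebesgue measure in $[0,1]$, and one can invoke the Mass Transference Principle of Beresnevich--Velani with the gauge $r \mapsto r^{1/\delta}$ (i.e. with exponent $s = 1/\delta \in [\eta, 1]$). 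This transfers the full Lebesgue mass into $(1/\delta)$-Hausdorff mass, producing
$$\mathcal{H}^{1/\delta}(G'_\delta) \ \geq \ \mathcal{H}^{1/\delta}([0,1]) = +\infty.$$

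The main subtlety is the almost-sure uniformity over all $\delta \in [1, 1/\eta]$. This turns out to be automatic: both the polynomial bound on $N_j$ in Lemma \ref{lem3.3} and the identity $G'_1 = [0,1]$ from Theorem \ref{prop5.1} are $\delta$-independent almost-sure events, so a single sample path of probability one serves for every $\delta$ simultaneously. The only remaining point is to check that the gauge $r \mapsto r^{1/\delta}$ satisfies the mild monotonicity/doubling hypotheses required by the Mass Transference Principle, which is a routine verification holding uniformly for $\delta$ in the compact range $[1, 1/\eta]$.
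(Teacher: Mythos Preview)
Your proposal is correct and follows essentially the same approach as the paper: a direct covering argument using the natural $\limsup$ cover together with the cardinality bound $N_j\leq 2^{\eta j(1+\varepsilon_j)}$ for the upper bound, and the Mass Transference Principle applied via Theorem~\ref{prop5.1} ($G'_1=[0,1]$) for the lower bound. Your additional remark on the almost-sure uniformity in $\delta$ is a welcome clarification that the paper leaves implicit.
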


\begin{proof}
The upper bound  $\dim_H G_\delta \leq 1/\delta $   follows by using   as coverings of $G_\delta$ the family $\{B(X_n, B^{-\delta}_n)\}_{j\geq J, n \in  A_j}$, for $J\geq 1$. For $\ep>0$,  
\begin{eqnarray*}
\mathcal{H}^{1/\delta+\ep}_{ 2^{-\eta J} } (G_\delta ) & \leq & \sum_{j\geq J} \sum_{n\in A_j} |B_n^{-\delta } |^{ 1/\delta+\ep}.
\end{eqnarray*}
By \eqref{eq3.5}, and using that $B_n\leq 2^{j\eta} $ when $n\in A_j$, one gets 
$$\mathcal{H}^{1/\delta+\ep}_{ 2^{-\eta J} }  (G_\delta ) \leq \sum_{j\geq J} 2^{\eta j(1+\varepsilon_j)}2^{-j\eta(1+\ep/\delta)},$$
 which  is the rest of a convergent series. Hence $\mathcal{H}^{1/\delta+\ep}(G_\delta ) =0 $ and $\dim_H G_\delta \leq 1/\delta+\ep$. 
 
  \medskip
  
  The fact that  $\mathcal{H} ^{1/\delta} (G'_\delta) =+\infty $ (giving the lower bound $\dim_H G'_\delta \geq 1/\delta$) is more delicate. The following  mass transference principle  \cite{BV,ref25} is useful.
  
\begin{theorem}\label{thm8.1}
Let $(x_n)_{n\in\mathbb{N}^*}$ be a real sequence in $[0,1]^d$  ($d\geq 1$) and $(\lambda_n)_{n\in\mathbb{N}^*}$ a decreasing sequence of   positive real numbers. For all $\delta \geq1$,  set
\begin{equation}
    L_\delta=\limsup_{n\rightarrow + \infty} B(x_n,\lambda_n^\delta)=\bigcap_{N\geq 1} \bigcup_{n\geq N} B(x_n,\lambda_n^\delta) \nonumber
\end{equation}
If the $d$-dimensional Lebesgue measure $\mathcal{L}(L_1)$ of $L_1$ equals 1,  then for all $\delta >1$, $\mathcal{H}^\frac{d}{\delta} (L_\delta)=+\infty$ and $\dimh  (L_\delta)\geq \frac{d}{\delta}$.
\end{theorem}
Theorem \ref{prop5.1} gives that $G'_1 =[0,1]$, almost surely.  In particular, $\mathcal{L}(G'_1)=1$. Applying the previous theorem to the (random) sequences $x_n=X_n$ and $\lambda_n = B_n^{-(1-\widetilde{\ep}_j)}$ when $n\in A_j$ yields  the claim of Proposition \ref{propfinal2}.
\end{proof}

We are now in position to conclude   the proof of Theorem \ref{maintheorem}.

\begin{proof}
First, by Lemma \ref{lem_range}, only  $H\in [\alpha\eta,\alpha]$ need to be considered.

Then, \eqref{majEH} yields that almost surely, $d^{[0,1]}_F(H) \leq \dimh G_\delta$, for every $\delta >\alpha/H$. Proposition \ref{propfinal2} yields $\dimh G_\delta \leq 1/\delta$, hence $d^{[0,1]}_F(H) \leq H/\alpha$.

Finally,  Proposition \ref{propfinal2} gives simultaneously that $\mathcal{H}^ {H/\alpha} (G'_{\alpha/H}) =+\infty $ and $\mathcal{H}^ {H/\alpha} (G_\delta )= 0$ for every $\delta <\alpha/H$.   So, $\mathcal{H}^ {H/\alpha} ( G'_{\alpha/H}   \setminus \bigcup_{\delta >  \frac{\alpha}{H}} G_\delta ) = +\infty$, and by \eqref{minEH}, $\mathcal{H}^ {H/\alpha}   (E_F(H) )=+\infty$. This gives $\dimh  E_F(H)  \geq H/\alpha$, and by the remarks above $d^{[0,1]}_F(H) = H/\alpha$.

When $H=\alpha$, the same argument gives that $\mathcal{L} (E_{F}(\alpha)\cap [0,1])=1$, i.e. $E_{F}(\alpha)$ is of full Lebesgue measure in $[0,1]$.
\end{proof} 
%%%%%%%%%%%%%%%%%%%%%%%%

%%%%%%%%%%%%%%%%%%%%%%%%
%%%%%%%%%%%%%%%%%%%%%%%%
%%%%%%%%%%%%%%%%%%%%%%%%
%%%%%%%%%%%%%%%%%%%%%%%%
%%%%%%%%%%%%%%%%%%%%%%%%
%%%%%%%%%%%%%%%%%%%%%%%
%%%%%%%%%%%%%%%%%%%%%%%
%%%%%%%%%%%%%%%%%%%%%%%
%%%%%%%%%%%%%%%%%%%%%%%
%%%%%%%%%%%%%%%%%%%%%%%
%%%%%%%%%%%%%%%%%%%%%%%
%%%%%%%%%%%%%%%%%%%%%%%
%%%%%%%%%%%%%%%%%%%%%%%
\section{Almost-everywhere   modulus of continuity} \label{sec:9}

 Le us explain how to obtain from what precedes the almost-everywhere   modulus of continuity for $F$, almost surely.
 
 \medskip
 
 By a Theorem by Jaffard-Meyer (Proposition 1.2 in \cite{ref26}),   the following (almost) equivalence holds true.
\begin{theorem}\label{thm9.1}
Let $f\in L^\infty_{loc}(\mathbb{R})$, $x_0\in \R$ and $H>0$.

If the  function $f$ has a local   continuity of continuity$\theta$ at $x_0$, then for some constant $C>0$ 
\begin{equation}
\label{eqmodule}
\forall  (s,t)\in U \ \ ,\ \ |W_f (s,t)|\leq K s^{ \frac{1}{2}}  ( \theta(s) +  \theta(|{x_0-t}| )).
\end{equation}
Conversely, if $f\in C^\ep(\R)$ for some $\ep>0$, and if \eqref{eqmodule} holds, then there exist   constants $\eta,C>0$  and a polynomial $P$ such that setting $j_0= \lfloor |\log_2|x-x_0|\rfloor$, one has
\begin{equation}
\label{eqmodule2}  \forall  x \mbox{ such that } |x-x_0|\leq \eta, \ \ | f(x) -P(x-x_0) |\leq  C \inf_{j\geq j_0} ( (j-j_0) \theta(|x-x_0|) +2^{-j\ep}) .
\end{equation}
 
\end{theorem}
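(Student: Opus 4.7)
The plan is to follow the classical wavelet characterization of pointwise regularity of Jaffard--Meyer, adapted to a general modulus of continuity $\theta$ (rather than the power $|x|^H$). The doubling property of $\theta$ (item (2) of Definition 5) is what makes $\theta$ behave, up to constants, like a power function: it gives the subadditivity estimate $\theta(a+b)\leq K(\theta(a)+\theta(b))$ on which every step relies.

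For the direct implication, I would exploit the zero-mean condition $\int\phi=0$ to rewrite
\begin{equation*}
W_f(s,t)=\frac{1}{\sqrt{s}}\int_\R \bigl(f(x)-f(x_0)\bigr)\,\phi\!\left(\frac{x-t}{s}\right)dx.
\end{equation*}
Since $\phi$ is compactly supported, the change of variable $u=(x-t)/s$ reduces the integral to $|u|\leq 1$, i.e.\ $|x-t|\leq s$. For $(s,t)$ close to $(0^+,x_0)$, every such $x$ lies in the neighborhood where the local modulus of continuity applies, so $|f(t+su)-f(x_0)|\leq K_{x_0}\theta(|t+su-x_0|)$, and the doubling property yields $\theta(|t+su-x_0|)\leq K(\theta(|t-x_0|)+\theta(s))$. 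Integrating against $|\phi(u)|$ gives $|W_f(s,t)|\leq K's^{1/2}(\theta(s)+\theta(|t-x_0|))$, which is exactly the claimed bound.

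For the converse, the natural route is to reconstruct $f$ from its wavelet coefficients in a compactly supported orthonormal basis $\{\psi_{j,k}\}$ (with enough vanishing moments so that $c_{j,k}$ inherits the same bound as $W_f(2^{-j},k2^{-j})$): $f=\sum_{j,k}c_{j,k}\psi_{j,k}$. Fix $x$ with $|x-x_0|$ small, and set $j_0=\lfloor -\log_2|x-x_0|\rfloor$. Split
\begin{equation*}
f(x)-P(x-x_0)=\sum_{j<j_0}\!+\sum_{j_0\leq j\leq J}\!+\sum_{j>J}
\end{equation*}
where the polynomial $P$ is the Taylor-type polynomial at $x_0$ built from the low-frequency wavelet coefficients; this allows absorbing the large-scale contributions and leaving only a remainder controlled by $|x-x_0|^{\lfloor\alpha\rfloor+1}2^{j(\lfloor\alpha\rfloor+1)}$ on each large scale $j<j_0$, which is summable because of the few vanishing moments of $\psi$. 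For $j_0\leq j\leq J$, only $O(1)$ wavelets $\psi_{j,k}$ have support meeting $\{x,x_0\}$, and the hypothesis together with the doubling property of $\theta$ bounds each contribution by $K\theta(|x-x_0|)$; summing over the $J-j_0$ intermediate scales produces the linear factor $(J-j_0)\theta(|x-x_0|)$. Finally for $j>J$, the uniform regularity $f\in C^\ep(\R)$ gives $|c_{j,k}|\lesssim 2^{-j(\ep+1/2)}$, so the tail is $O(2^{-J\ep})$. Taking the infimum over $J\geq j_0$ yields the bound \eqref{eqmodule2}.

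The main obstacle is the bookkeeping in the converse direction. Two points deserve care: first, one must verify that the ``good'' polynomial $P$ extracted from low-frequency wavelets really has degree at most $\lfloor\ep\rfloor$ and does not itself produce spurious error terms; second, one has to check that the factor $(j-j_0)$ is indeed the exact cost of summing over intermediate scales, i.e.\ that the number of wavelets that simultaneously see a neighborhood of $x$ and of $x_0$ is bounded by a constant at each scale (this is where the compact support of $\psi_{j,k}$, of length $\sim 2^{-j}\leq 2^{-j_0}\sim|x-x_0|$, is essential). Once these two points are checked, the claim follows by optimizing in $J$.
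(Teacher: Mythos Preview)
The paper does not prove this theorem at all: it is stated as a quotation of Proposition~1.2 of Jaffard--Meyer \cite{ref26} (``By a Theorem by Jaffard--Meyer (Proposition 1.2 in \cite{ref26}), the following (almost) equivalence holds true''), and is then used as a black box in Section~\ref{sec:9}. There is therefore no ``paper's own proof'' to compare your proposal against.

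That said, your sketch is a faithful outline of the classical Jaffard--Meyer argument, and the overall architecture (direct part via the vanishing moment of $\phi$ and the doubling of $\theta$; converse via an orthonormal wavelet expansion split at scales $j_0$ and $J$, with the low-frequency part absorbed into $P$, the intermediate block producing the factor $(J-j_0)\theta(|x-x_0|)$, and the tail controlled by the global $C^\ep$ regularity) is correct. Two small points of care: the symbol $\alpha$ that appears in your low-frequency bound is undefined in this context and should rather be the number of vanishing moments (or the smoothness order) of the analyzing wavelet; and for the intermediate block one should also check that for the relevant $k$ one has $|k2^{-j}-x_0|\lesssim |x-x_0|$, so that the hypothesis \eqref{eqmodule} indeed yields $\theta(|x-x_0|)$ after doubling. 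With these clarifications your argument reproduces the proof in \cite{ref26}.
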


 Observe that if $\theta(h)=|h|^\beta |\log |h||^\gamma$ with $ \ep< \beta<1$, then the infimum at the right hand side of \eqref{eqmodule2} is (roughly) reached at $j=j_0\beta/\ep $, and  \eqref{eqmodule2} reduces to
 $$ | f(x) -P(x-x_0) |\leq   C  |x-x_0| ^\beta |\log  |x-x_0|| ^{1+\gamma}.$$
 
 \medskip

 Coming back to Proposition \ref{prop7.1}, let $x_0\in G'_1$. At the end of the proof, recall the lower  bound \eqref{eqfinal1} for the wavelet coefficient $|W_F(B_{n_k}^{-\frac{1}{\eta}},X_{n_k}) |  \geq K B_{n_k}^{-\frac{1}{2\eta}-\alpha (1+\varepsilon_{j_k})}$.

 Remembering that   $ B_{n_k}  \sim 2^{  \eta j_k }   $, the formulas for $\ep _{j_k}$ and the fact that $\widetilde{\ep}_{j_k}$,   and 
  $|x_0-X_{n_k}|\leq  B_{n_k}^{- (1-\widetilde{\varepsilon}_{j_k})}$, one   successively has (for large integers $k$)
  \begin{align*}
 B_{n_k}^{ - \ep_{j_k}}  & \sim |\log j_k|  ^{-1} \geq  C | \log|x_0-X_{n_k}| |,\\
 B_{n_k}^{ - \widetilde \ep_{j_k}}  & \sim |\log j_k| ^{-1} \geq C | \log|x_0-X_{n_k}||,\\
 B_{n_k}^ {-1} &  \geq  C |x_0-X_{n_k}|  |\log  |x_0-X_{n_k}|  |,
\end{align*}
for some constant $C>0$  that depends on $\eta $ only. Hence,  
\begin{align*}
|W_F(B_{n_k}^{-\frac{1}{\eta}},X_{n_k}) | & \geq     K B_{n_k}^{-\frac{1}{2\eta}-\alpha (1+\varepsilon_{j_k})} 
\geq  K B_{n_k}^{-\frac{1}{2\eta}} B_{n_k}^{-\alpha }|  \log|x_0-X_{n_k}| |^\alpha\\ 
&  \geq  K C B_{n_k}^{-\frac{1}{2\eta}}    |x_0-X_{n_k}|^\alpha  |\log  |x_0-X_{n_k}|  | ^{2\alpha}\\
&  \geq  \frac{K C}{2} B_{n_k}^{-\frac{1}{2\eta}}   (\theta( |x_0-X_{n_k}|) +\theta( B_{n_k}^{-\frac{1}{\eta}} )),
\end{align*}
 where $\theta(h) = |h|^\alpha |\log |h||^{2\alpha}$ and where we used that $ B_{n_k}^{-\frac{1}{\eta}} <\!\!\! < |x_0-X_{n_k}|$.
 
 This shows that almost surely,  for every $x\in G'_1$, the   modulus of continuity  is larger  than $|h|^\alpha |\log |h||^{2\alpha}$.
 
 \medskip
 
 Let us now introduce the set  
 $$   {\widetilde {G_1}}  = \limsup_{j\rightarrow +\infty} \bigcup_{n\in A_j} B(X_n,B_n^{-   (1+3 {\varepsilon}_j)})   .$$

Recalling \eqref{eq3.5},     almost surely, 
$$\sum_{n\in A_j} |B(X_n,B_n^{-   (1+2 {\varepsilon}_j)}) | \leq 2^{\eta j(1+\varepsilon_j)} 2^{-\eta j  (1+3 {\varepsilon}_j)}  = j^{-2}.$$
Consequently,  $  {\widetilde {G_1}}$ has zero Lebesgue measure.

Then, a slight adaptation of the proof of Proposition \ref{prop6.1} shows that almost surely,  for every  $x_0\notin  {\widetilde {G_1}}$, there exists $K_{x_0} >0$ such that for any $x$ close to $x_0$,
$$|F(x)-F(x_0)| \leq K_{x_0} |x-x_0|^{ {\alpha} }  |\log_2 |x-x_0|\ |^{3+\alpha}  .$$
  The modification consists in replacing $\delta$ by  $1+3\ep_j$, and adapting accordingly the computations.

\medskip

The conclusion follows by considering the set $G= G'_1\setminus  {\widetilde {G_1}} $. Indeed, since $G'_1$ and ${\widetilde {G_1}} $ respectively have  full   and zero Lebesgue measure, $G $ has full Lebesgue measure. And the two arguments above show that almost surely, for every $x_0 \in G$, the modulus of continuity $\theta_{x_0}$ of $F$ at $x_0$ satisfies
 $$ |h|^\alpha |\log |h||^{2\alpha}  \leq \theta_{x_0} (h) \leq |h|^{ {\alpha} }  |\log_2 |h|\ |^{3+\alpha} ,$$
 hence items (ii) and (iii) of Theorem \ref{mainth2}.

%%%%%%%%%%%%%%%%%%%%%%%%
%%%%%%%%%%%%%%%%%%%%%%%%
%%%%%%%%%%%%%%%%%%%%%%%%
%%%%%%%%%%%%%%%%%%%%%%%%
%%%%%%%%%%%%%%%%%%%%%%%%
%%%%%%%%%%%%%%%%%%%%%%%%
%%%%%%%%%%%%%%%%%%%%%%%
%%%%%%%%%%%%%%%%%%%%%%%
%%%%%%%%%%%%%%%%%%%%%%%
%%%%%%%%%%%%%%%%%%%%%%%
%%%%%%%%%%%%%%%%%%%%%%%
%%%%%%%%%%%%%%%%%%%%%%%
%%%%%%%%%%%%%%%%%%%%%%%
%%%%%%%%%%%%%%%%%%%%%%%
\section{Perpectives} \label{sec:10}

The case  where $\alpha>1$ is a possible extension of our article.

\medskip

It is also a natural question for applications to ask whether the sample paths of $F$ satisfy a multifractal formalism. 

\medskip

It would  be interesting to determine whether $F$ possess chirps or oscillating singularities, i.e. locally behaves like
$$|x-x_0|^\alpha |\log|x-x_0||^\beta$$
around some points $x_0$. Chirps are a key notion in many domains - for instance, the existence of gravitational waves has been experimentally proved thanks to wavelet based-algorithms  able to detect chirps (that are the  signature of  coalescent binary black holes) in signals extracted from the LIGO and VIRGO interferometers.

\medskip

Finally, it is worth investigating the case where   the series defining $F$ does not converge uniformly, this may occur for some choices of the parameters $\alpha$ and $\eta$ (recall that in the   present paper, the uniform convergence follows from the sparse distribution of the pulses). In this situation, the relevant quantities to analyze are    the $p$-exponents of $F$ as defined in  {{\cite{ref32}}}: A function $f$ belongs to $T^p_\alpha(x_0)$ (which generalizes the spaces $C^\alpha(x_0)$)  when there exist a polynomial $P$ and a constant $C>0$ such that
$$\mbox{for every sufficiently small $h>0$, } \ \ \left(  \frac{1}{h^d} \int_{B(0,h)} |f( x)-P(x)|^p\right) ^{1/p} \leq C |h|^\alpha.$$
Then the $p$-exponent is $h^p_f(x_0) = \sup \{\alpha\geq 0: f\in T^p_\alpha(x_0)\}$, and  the multifractal analysis of the $p$-exponents of $F$ is a challenging issue.

\section*{acknowledgments}
The authors thank   St\'ephane Jaffard for enlightening discussions around this article.

\end{document}